\newtheorem{theorem}{Theorem}[section]
\newtheorem{lemma}[theorem]{Lemma}
\newtheorem{proposition}[theorem]{Proposition}
\newtheorem{definition}[theorem]{Definition}
\def\bbZ{\mathbb{Z}}
\def\bbR{\mathbb{R}}
\begin{document}
\title{Reconstruction of functions in principal shift-invariant subspaces of mixed
Lebesgue spaces}

\author{Qingyue Zhang\\
\footnotesize\it College of Science,  Tianjin University of Technology,
      Tianjin~300384, China\\
      \footnotesize\it     {e-mails: {jczhangqingyue@163.com}}\\ }

\maketitle

\textbf{Abstract.}\,\,
In this paper, we discuss to the nonuniform sampling problem in principal shift-invariant subspaces of mixed
Lebesgue spaces. We proposed a fast reconstruction algorithm which allows to exactly reconstruct the functions in the principal shift-invariant subspaces as long as the sampling set $X=\{(x_{j},y_{k}):k,j\in \mathbb{J}\}$ is sufficiently dense. Our results improve the result in \cite{LiLiu}.

\textbf{Key words.}\,\,
mixed Lebesgue spaces; nonuniform sampling; principal shift-invariant subspace.

\textbf{2010 MR Subject Classification}\,\,
94A20, 94A12, 42C15, 41A58

\section{Introduction and motivation}
\ \ \ \
Mixed Lebesgue spaces generalize Lebesgue spaces. It plays a crucial role in the study of time based partial differential equations. In 1961, Benedek firstly proposed  mixed Lebesgue spaces \cite{Benedek,Benedek2}. Mixed Lebesgue spaces was arised due to considering functions that depend on independent quantities with different properties. In the 1980s, Fernandez, and Francia, Ruiz and Torrea developed the theory of mixed Lebesgue spaces in integral operators and Calder\'on--Zygmund operators, respectively \cite{Fernandez,Rubio}. Recently, Torres and Ward, and Li, Liu and Zhang studied sampling problem in shift-invariant subspaces of mixed Lebesgue spaces \cite{Torres,Ward,LiLiu}. In this context, we study the reconstruction of the signals in shift-invariant subspaces of mixed Lebesgue spaces.

Sampling theorem is the theoretical basis of communication system, and is also one of the most powerful basic tools in signal processing. Shannon formally proposed  sampling theorem in 1948 \cite{S,S1}. By Shannon sampling
theorem,  for any $f\in L^{2}(\mathbb{R})$ with $\mathrm{supp}\hat{f}\subseteq[-T,T],$ one has
$$f(x)=\sum_{n\in \mathbb{Z}}f\left(\frac{n}{2T}\right)\frac{\sin\pi(2Tx-n)}{\pi(2Tx-n)}=\sum_{n\in \mathbb{Z}}f\left(\frac{n}{2T}\right)\mathrm{sinc}(2Tx-n),$$
where the series converges uniformly on compact sets and in $L^{2}(\mathbb{R})$, and
$$\hat{f}(\xi)=\int_{\mathbb{R}}f(x)e^{-2\pi i x\xi }dx,\ \ \ \ \ \ \xi\in\mathbb{R}.$$
However, in many realistic situations, Shannon sampling theorem is not applicable, because the known data is only a nonuniform
sampling set. To offset the shortcoming of Shannon sampling theorem, in recent years, there are many results concerning nonuniform sampling problem \cite{SQ,BF1,M,Venkataramani,Christopher,Sun,Zhou}.
Sampling problems
also have been generalized to more general shift-invariant spaces \cite{Aldroubi1,Aldroubi2,Aldroubi3,Aldroubi4,Vaidyanathan,Zhang1} of the form
$$V(\phi)=\left\{\sum_{k\in\mathbb{Z}}c(k)\phi(x-k):\{c(k):k\in\mathbb{Z}\}\in \ell^{2}(\mathbb{Z})\right\}.$$

Torres and Ward firstly studied the sampling problem for band-limited functions in mixed Lebesgue spaces \cite{Torres,Ward}. Then, Li, Liu and Zhang generalized their results to principal shift-invariant spaces of mixed Lebesgue spaces \cite{LiLiu}. However, the generator of shift-invariant spaces studied by Li, Liu and Zhang belongs to Wiener amalgam space $W(L^{1})$. In this paper, we discuss the bigger shift-invariant subspaces generated by a function in  mixed Wiener amalgam space $W(L^{1,1})$ (see Definition \ref{Wieneramalgamspace}). 

The paper is organized as follows. In the next section, we give the definitions and preliminary results needed. Section 3 proposes a fast reconstruction algorithm. Finally, concluding remarks are presented in section 4.

\section{Definitions and preliminary results}
\ \ \ \
In this section, we give some definitions and preliminary results needed in this paper. Firstly, we give the definition of mixed Lebesgue spaces $L^{p,q}(\Bbb R^{d+1})$.
\begin{definition}
For $1 \leq p,q <+\infty$. $L^{p,q}=L^{p,q}(\Bbb R^{d+1})$ consists of all measurable functions $f=f(x,y)$ defined on $\Bbb R\times\Bbb R^{d}$ satisfying
$$\|f\|_{L^{p,q}}=\left[\int_{\Bbb R}\left(\int_{\Bbb R^d}|f(x,y)|^{q}dy\right)^{\frac{p}{q}}dx\right]^{\frac{1}{p}}<+\infty.$$
\end{definition}
The following is the definition of corresponding sequence spaces
 $$\ell^{p,q}=\ell^{p,q}(\mathbb{Z}^{d+1})=\left\{c: \|c\|^{p}_{\ell^{p,q}}=\sum_{k_{1} \in \Bbb Z}\left(\sum_{k_{2} \in \Bbb Z^d }
|c(k_{1},k_{2})|^{q}\right)^{\frac{p}{q}}
<+\infty\right\}.$$


To controling the local behavior of functions, we introduce mixed Wiener amalgam spaces $W(L^{p,q})(\Bbb R^{d+1})$. 
\begin{definition}\label{Wieneramalgamspace}
For $1\leq p,q<\infty$, if a measurable function $f$ satisfies
$$\|f\|^{p}_{W(L^{p,q})}:=\sum_{n\in \Bbb Z}\sup_{x\in[0,1]}\left[\sum_{l\in \Bbb Z^d}\sup_{y\in [0,1]^d}|f(x+n,y+l)|^{q}\right]^{p/q}<\infty,$$
then we say that $f$ belongs to the mixed Wiener amalgam space $W(L^{p,q})=W(L^{p,q})(\Bbb R^{d+1})$.
\end{definition} 

For $1\leq p<\infty,$ if a function $f$ satisfies
$$\|f\|^{p}_{W(L^{p})}:=\sum_{k\in \Bbb Z^{d+1}}\mathrm{ess\sup}_{x\in[0,1]^{d+1}} |f(x+k)|^{p}<\infty,$$
then we say that $f$ belongs to the Wiener amalgam space $W(L^{p})=W(L^{p})(\Bbb R^{d+1}).$

For $p=\infty$, if a measurable function $f$ satisfies $$\|f\|_{W(L^{\infty})}:=\sup_{k\in\Bbb Z^{d+1}}\mathrm{ess\sup}_{x\in[0,1]^{d+1}}|f(x+k)|<\infty,$$ then we say that
$f$ belongs to $W(L^{\infty})=W(L^{\infty})(\Bbb R^{d+1})$. Obviously, $W(L^{p})\subset W(L^{p,p}).$

Since sampling theorems only deal with continuous functions, we introduce the definitions of $W_{0}\left ( L^{p,q} \right )$ and $W_{0}\left ( L^{p} \right )$. Let $W_{0}\left ( L^{p,q} \right )\,\left ( 1\leq p,q<\infty  \right )$ denote the space of all continuous functions in $W(L^{p,q})$. Let $W_{0}\left ( L^{p} \right )\,\left ( 1\leq p\leq \infty  \right )$ denote the space of all continuous functions in $W(L^{p})$.

For any sequence $c\in \ell^{p}~(1\leq p\leq +\infty)$ and $\phi\in W(L^{1,1})$, define the semi-discrete convolution of $c$ and $\phi$ by
$$(c*_{sd}\phi)(x)=\sum_{k\in\mathbb{Z}^{d+1}}c(k)\phi(x-k).$$

\subsection{ Principal shift-invariant subspaces in $L^{p,q}$ }

\ \ \ \ For $\phi\in W(L^{1,1})$, the principal shift-invariant space in the mixed Legesgue spaces $L^{p,q}$ is defined by
\begin{eqnarray*}
V_{p,q}(\phi)=\left\{\sum_{k_{1}\in \Bbb Z}\sum_{k_{2}\in \Bbb Z^{d}}c(k_{1},k_{2})\phi(\cdot-k_{1},\cdot-k_{2}):c=\left\{c(k_{1},k_{2}):k_{1}\in \Bbb Z,k_{2}\in\Bbb Z^{d}\right\}\in \ell^{p,q}\right\}.
\end{eqnarray*}
It is easy to see that the double sum pointwisely converges almost everywhere. In fact, since $c=\left\{c(k_{1},k_{2}):k_{1}\in \Bbb Z,k_{2}\in\Bbb Z^{d}\right\}\in\ell^{p,q}$, we have $c\in \ell^{\infty}.$ This combines $\phi\in W(L^{1,1})$ obtians
\begin{eqnarray*}
&&\sum_{k_{1}\in \Bbb Z}\sum_{k_{2}\in \Bbb Z^{d}}\left|c(k_{1},k_{2})\phi(x_{1}-k_{1},x_{2}-k_{2})\right|\\
&& \quad\leq\|c\|_\infty\sum_{k_{1}\in \Bbb Z}\sum_{k_{2}\in \Bbb Z^{d}}|\phi(x_{1}-k_{1},x_{2}-k_{2})|\\
&& \quad\leq\|c\|_\infty\sum_{k_{1}\in \Bbb Z}\sup_{x_{1}\in[0,1]}\sum_{k_{2}\in \bbZ^{d}}\sup_{x_{2}\in[0,1]^{d}}|\phi(x_{1}-k_{1},x_{2}-k_{2})|\\
&&\quad=\|c\|_\infty\| \phi\|_{W(L^{1,1})}<\infty\,(a.e.).
 \end{eqnarray*}

The following proposition gives that the principal shift-invariant spaces are well-defined in $L^{p,q}$.

\begin{proposition}\cite[Theorem 3.1]{LiLiu}\label{pro:stableup1}
Assume that $1\leq p,q<\infty $ and $\phi\in W(L^{1,1})$.
Then for any $c\in \ell^{p,q}$, the function $f=\sum_{k_{1}\in \Bbb Z}\sum_{k_{2}\in \Bbb Z^d}c(k_{1},k_{2})\phi(\cdot-k_{1},\cdot-k_{2})$
 belongs to $L^{p,q}$ and
$$
\|f\|_{L^{p,q}}\leq\|c\|_{\ell^{p,q}}\left \| \phi \right \|_{W(L^{1,1})}.
$$
\end{proposition}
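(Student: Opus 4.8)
The plan is to exploit the product structure shared by the mixed Lebesgue norm and the mixed Wiener amalgam norm: slice $\mathbb{R}^{d+1}=\mathbb{R}\times\mathbb{R}^{d}$ into unit cubes indexed by $\mathbb{Z}\times\mathbb{Z}^{d}$, and reduce the two-scale estimate to two applications of the classical one-scale Wiener amalgam bound together with Young's convolution inequality on sequence spaces. The pointwise almost-everywhere convergence of the defining double sum has already been established in the excerpt, so $f$ is a well-defined measurable function and it suffices to estimate its norm from the pointwise majorization there.

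First I would fix $x\in\mathbb{R}$ and treat the slice $f(x,\cdot)$ as a function on $\mathbb{R}^{d}$. Writing $f(x,\cdot)=\sum_{k_{1}\in\mathbb{Z}}h_{k_{1}}$ with $h_{k_{1}}(y)=\sum_{k_{2}\in\mathbb{Z}^{d}}c(k_{1},k_{2})\phi(x-k_{1},y-k_{2})$, each $h_{k_{1}}$ is the $\mathbb{Z}^{d}$-semi-discrete convolution of the sequence $c(k_{1},\cdot)\in\ell^{q}(\mathbb{Z}^{d})$ with the slice $\phi(x-k_{1},\cdot)$. I would then invoke, or prove in one line by the standard unit-cube argument, the estimate $\|a*_{sd}\psi\|_{L^{q}(\mathbb{R}^{d})}\leq\|a\|_{\ell^{q}(\mathbb{Z}^{d})}\,\|\psi\|_{W(L^{1})(\mathbb{R}^{d})}$ for $a\in\ell^{q}(\mathbb{Z}^{d})$, $\psi\in W(L^{1})(\mathbb{R}^{d})$: bounding $|\psi(y+l-k_{2})|$ for $y\in[0,1]^{d}$ by $\sup_{[0,1]^{d}}|\psi(\cdot+l-k_{2})|$ turns the integral over $l+[0,1]^{d}$ into the $\ell^{q}(\mathbb{Z}^{d})$-norm of the convolution of $|a|$ with an $\ell^{1}(\mathbb{Z}^{d})$ sequence whose $\ell^{1}$-norm equals $\|\psi\|_{W(L^{1})}$, and Young finishes it. Hence $\|h_{k_{1}}\|_{L^{q}(\mathbb{R}^{d})}\leq\|c(k_{1},\cdot)\|_{\ell^{q}}\,\|\phi(x-k_{1},\cdot)\|_{W(L^{1})}$, and Minkowski's inequality in $L^{q}(\mathbb{R}^{d})$ (valid for infinite sums by monotone convergence) gives $\|f(x,\cdot)\|_{L^{q}(\mathbb{R}^{d})}\leq\sum_{k_{1}}\|c(k_{1},\cdot)\|_{\ell^{q}}\,\|\phi(x-k_{1},\cdot)\|_{W(L^{1})}$.

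Next I would make the slice norms uniform over a unit interval. For $x\in[n,n+1)$ write $x=n+s$, $s\in[0,1)$; then $\|\phi(x-k_{1},\cdot)\|_{W(L^{1})(\mathbb{R}^{d})}=\sum_{l\in\mathbb{Z}^{d}}\sup_{y\in[0,1]^{d}}|\phi((n-k_{1})+s,y+l)|$, and since in Definition \ref{Wieneramalgamspace} the supremum over the first variable sits \emph{outside} the sum over $l$, this is $\leq A(n-k_{1})$ where $A(m):=\sup_{s\in[0,1]}\sum_{l\in\mathbb{Z}^{d}}\sup_{y\in[0,1]^{d}}|\phi(m+s,y+l)|$. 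By the case $p=q=1$ of Definition \ref{Wieneramalgamspace}, $\{A(m)\}_{m\in\mathbb{Z}}\in\ell^{1}(\mathbb{Z})$ with $\|\{A(m)\}\|_{\ell^{1}}=\|\phi\|_{W(L^{1,1})}$. Putting $\gamma(k_{1}):=\|c(k_{1},\cdot)\|_{\ell^{q}(\mathbb{Z}^{d})}$, so that $\gamma\in\ell^{p}(\mathbb{Z})$ with $\|\gamma\|_{\ell^{p}}=\|c\|_{\ell^{p,q}}$, the previous step yields, for a.e. $x\in[n,n+1)$, $\|f(x,\cdot)\|_{L^{q}(\mathbb{R}^{d})}\leq(\gamma*A)(n)$.

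Finally I would assemble the two scales:
$$\|f\|_{L^{p,q}}^{p}=\sum_{n\in\mathbb{Z}}\int_{n}^{n+1}\|f(x,\cdot)\|_{L^{q}(\mathbb{R}^{d})}^{p}\,dx\leq\sum_{n\in\mathbb{Z}}\big((\gamma*A)(n)\big)^{p}=\|\gamma*A\|_{\ell^{p}(\mathbb{Z})}^{p},$$
and one more application of Young's inequality $\ell^{p}*\ell^{1}\hookrightarrow\ell^{p}$ gives $\|\gamma*A\|_{\ell^{p}}\leq\|\gamma\|_{\ell^{p}}\|A\|_{\ell^{1}}=\|c\|_{\ell^{p,q}}\|\phi\|_{W(L^{1,1})}$; taking $p$-th roots is the claim. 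The only point that requires genuine care—and hence the main obstacle—is the bookkeeping around the nested supremum in Definition \ref{Wieneramalgamspace}: one must keep the supremum over the $\mathbb{R}$-variable outside the $\mathbb{Z}^{d}$-sum throughout, so that the uniform-in-$s$ bound $\|\phi((n-k_{1})+s,\cdot)\|_{W(L^{1})}\leq A(n-k_{1})$ is legitimate; everything else (the interchanges of sum and integral, Minkowski in $L^{q}$, and the two Young inequalities) is routine.
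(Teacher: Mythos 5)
Your argument is correct: the slicing of $\mathbb{R}^{d+1}$ into unit cubes, the inner estimate $\|a*_{sd}\psi\|_{L^{q}(\mathbb{R}^{d})}\leq\|a\|_{\ell^{q}}\|\psi\|_{W(L^{1})(\mathbb{R}^{d})}$, the uniform-in-$s$ bound $\|\phi((n-k_{1})+s,\cdot)\|_{W(L^{1})}\leq A(n-k_{1})$ with $\|A\|_{\ell^{1}(\mathbb{Z})}=\|\phi\|_{W(L^{1,1})}$, and the two applications of Young's inequality all check out, and you correctly flag the one delicate point (keeping the $\sup$ over the first variable outside the $\mathbb{Z}^{d}$-sum). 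Note that this paper does not prove the proposition at all --- it is imported verbatim as \cite[Theorem 3.1]{LiLiu} --- so your write-up is a self-contained reconstruction of the standard proof rather than something to be measured against an argument in this text; it matches the canonical approach one would expect in the cited source.
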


The following proposition gives the norm equivalence. It can be proved by the proof of \cite[Theorem 3.4]{LiLiu}. For convenience, for quantities $X$ and $Y$, $X\approx Y$ denotes that there exist constants $c_{1},\,c_{2}$ such that $c_{1}X\leq Y\leq c_{2}X$.

\begin{proposition}\label{pro:stable}
Let $1< p,q<\infty $. Assume that $\phi\in W(L^{1,1})$ satisfies
\begin{equation}\label{eq:dayuling}
\sum_{k\in\mathbb{Z}^{d+1}}\left | \widehat{\phi}\left ( \xi +2\pi k \right )  \right |^{2}> 0, \quad \xi \in \mathbb{R}^{d+1}.
\end{equation}
Then for any $c\in \ell^{p,q}$, the function $f=\sum_{k_{1}\in \Bbb Z}\sum_{k_{2}\in \Bbb Z^d}c(k_{1},k_{2})\phi(\cdot-k_{1},\cdot-k_{2})$ belongs to $L^{p,q}$, and
$$
\|c\|_{\ell^{p,q}}\approx\|f\|_{L^{p,q}}.
$$
\end{proposition}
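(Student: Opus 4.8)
The estimate $\|f\|_{L^{p,q}}\le\|c\|_{\ell^{p,q}}\|\phi\|_{W(L^{1,1})}$ is already contained in Proposition \ref{pro:stableup1}, so the whole content of the statement lies in the reverse inequality $\|c\|_{\ell^{p,q}}\lesssim\|f\|_{L^{p,q}}$. The plan is to construct a dual generator $\widetilde\phi\in W(L^{1,1})$ satisfying the biorthogonality $\langle\phi(\cdot-j),\widetilde\phi(\cdot-k)\rangle=\delta_{j,k}$, to recover the coefficients from $f$ as $c(k)=\langle f,\widetilde\phi(\cdot-k)\rangle$, and then to bound the associated analysis operator by dualizing Proposition \ref{pro:stableup1}. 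This is the scheme used for the smaller space $W(L^{1})$ in \cite[Theorem 3.4]{LiLiu}, and I would check that every step survives the replacement $W(L^{1})\rightsquigarrow W(L^{1,1})$.

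First I would set $\Phi(\xi):=\sum_{k\in\mathbb{Z}^{d+1}}|\widehat\phi(\xi+2\pi k)|^{2}$. Since $\phi\in W(L^{1,1})$ and the mixed amalgam is stable under convolution, the autocorrelation $x\mapsto\int_{\mathbb{R}^{d+1}}\phi(t)\overline{\phi(t-x)}\,dt$ again lies in $W(L^{1,1})$, so its integer samples form a sequence in $\ell^{1,1}(\mathbb{Z}^{d+1})=\ell^{1}(\mathbb{Z}^{d+1})$; by Poisson summation $\Phi$ is the absolutely convergent Fourier series of that sequence, hence continuous and $2\pi\mathbb{Z}^{d+1}$-periodic. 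Condition (\ref{eq:dayuling}) forces $\Phi>0$ everywhere, so compactness of the torus gives $0<A\le\Phi(\xi)\le B<\infty$. By Wiener's lemma on $\ell^{1}(\mathbb{Z}^{d+1})$, $1/\Phi$ is the Fourier series of some $b\in\ell^{1}(\mathbb{Z}^{d+1})=\ell^{1,1}(\mathbb{Z}^{d+1})$, and I would put $\widetilde\phi:=b*_{sd}\phi$; since $W(L^{1,1})$ is a module over $\ell^{1}(\mathbb{Z}^{d+1})$ under semi-discrete convolution, $\widetilde\phi\in W(L^{1,1})$. On the Fourier side $\widehat{\widetilde\phi}=\widehat b\,\widehat\phi$ with $\widehat b$ real and $2\pi\mathbb{Z}^{d+1}$-periodic, so $\sum_{k}\widehat\phi(\xi+2\pi k)\overline{\widehat{\widetilde\phi}(\xi+2\pi k)}=\widehat b(\xi)\,\Phi(\xi)\equiv1$, and Poisson summation turns this identity into $\langle\phi(\cdot-j),\widetilde\phi(\cdot-k)\rangle=\delta_{j,k}$. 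Because $c\in\ell^{p,q}$ makes the series $\sum_{k_{1},k_{2}}c(k_{1},k_{2})\phi(\cdot-k_{1},\cdot-k_{2})$ converge to $f$ in $L^{p,q}$ (finitely supported sequences are dense in $\ell^{p,q}$ and $c\mapsto c*_{sd}\phi$ is bounded by Proposition \ref{pro:stableup1}) while $\widetilde\phi(\cdot-k)\in L^{p',q'}=(L^{p,q})^{*}$, evaluating this bounded functional term by term yields $c(k)=\langle f,\widetilde\phi(\cdot-k)\rangle$ for every $k$.

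It then remains to prove $\|\{\langle g,\widetilde\phi(\cdot-k)\rangle\}_{k}\|_{\ell^{p,q}}\le\|\widetilde\phi\|_{W(L^{1,1})}\|g\|_{L^{p,q}}$. But this operator is exactly the Banach-space adjoint of the synthesis operator $c\mapsto c*_{sd}\widetilde\phi$, which by Proposition \ref{pro:stableup1}, applied with the conjugate exponents $p',q'$ (still in $(1,\infty)$), maps $\ell^{p',q'}$ into $L^{p',q'}$ with norm at most $\|\widetilde\phi\|_{W(L^{1,1})}$; since $(L^{p',q'})^{*}=L^{p,q}$ and $(\ell^{p',q'})^{*}=\ell^{p,q}$ for $1<p,q<\infty$, the adjoint inherits the same norm and the claimed bound follows. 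Feeding in $c(k)=\langle f,\widetilde\phi(\cdot-k)\rangle$ gives $\|c\|_{\ell^{p,q}}\le\|\widetilde\phi\|_{W(L^{1,1})}\|f\|_{L^{p,q}}$, which combined with Proposition \ref{pro:stableup1} proves $\|c\|_{\ell^{p,q}}\approx\|f\|_{L^{p,q}}$.

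The main obstacle is not the duality argument of the last paragraph, which is soft, but the dual-generator construction, and in particular the amalgam-space facts it rests on: stability of $W(L^{1,1})$ under convolution, the identifications $\ell^{1,1}(\mathbb{Z}^{d+1})=\ell^{1}(\mathbb{Z}^{d+1})$ and $W(L^{1,1})$ being an $\ell^{1}$-module for semi-discrete convolution, and the continuity (hence boundedness above and below) of the periodization $\Phi$. These are the places where working in $W(L^{1,1})$ rather than only $W(L^{1})$ actually matters, and where I expect the argument to follow \cite[Theorem 3.4]{LiLiu} \emph{mutatis mutandis} rather than verbatim.
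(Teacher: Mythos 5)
Your proposal is correct and follows essentially the same route as the paper: the upper bound is Proposition \ref{pro:stableup1}, and the lower bound comes from a dual generator $g\in W(L^{1,1})$ of the form $d*_{sd}\phi$ with $d\in\ell^{1}$, coefficient recovery $c(k)=\langle f,g(\cdot-k)\rangle$, and a Hölder/duality estimate pairing against $\ell^{p',q'}$ — exactly the scheme the paper carries out for the multi-generator version in Theorem \ref{thm:stable} (and attributes to \cite[Theorem 3.4]{LiLiu}). The only difference is cosmetic: you build the dual generator explicitly via periodization and Wiener's lemma, whereas the paper quotes its existence and $W(L^{1,1})$-membership from Propositions \ref{pro:stable if and only if} and \ref{lem:gW}.
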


In order to define shift-invariant spaces reasonably, we require the generator $\phi$ of $V_{p,q}(\phi)$ satisfies (\ref{eq:dayuling}).

\subsection{Multiply generated shift-invariant space}
\ \ \ \
Let $\mathfrak{B}$ be a Banach space. $(\mathfrak{B})^{(r)}$ denotes $r$ copies $\mathfrak{B}\times\cdots\times\mathfrak{B}$ of $\mathfrak{B}$. If $C=(c_{1},c_{2},\cdots,c_{r})^{T}\in (\mathfrak{B})^{(r)}$, then we define the norm of $C$ by
\[
\|C\|_{(\mathfrak{B})^{(r)}}=\left(\sum_{j=1}^{r}\|c_{j}\|^{2}_{\mathfrak{B}}\right)^{1/2}.
\]

For $\Phi=(\phi_{1},\phi_{2},\cdots,\phi_{r})^{T}\in W(L^{1,1})^{(r)}$, the multiply generated shift-invariant space in the mixed Legesgue spaces $L^{p,q}$ is defined by
\begin{eqnarray*}
&&V_{p,q}(\Phi)=\left\{\sum_{j=1}^{r}\sum_{k_{1}\in \Bbb Z}\sum_{k_{2}\in \Bbb Z^{d}}c_{j}(k_{1},k_{2})\phi_{j}(\cdot-k_{1},\cdot-k_{2}):\right.\\
&& \quad\quad\quad\quad\quad\quad\quad\quad\left.c_{j}=\left\{c_{j}(k_{1},k_{2}):k_{1}\in \Bbb Z,k_{2}\in\Bbb Z^{d}\right\}\in \ell^{p,q}\right\}.
\end{eqnarray*}
It is easy to see that the three sum pointwisely converges almost everywhere. In fact, for any $1\leq j\leq r$, $c_{j}=\left\{c_{j}(k_{1},k_{2}):k_{1}\in \Bbb Z,k_{2}\in\Bbb Z^{d}\right\}\in\ell^{p,q}$ derives $c_{j}\in \ell^{\infty}.$ This combines $\Phi=(\phi_{1},\phi_{2},\cdots,\phi_{r})^{T}\in W(L^{1,1})^{(r)}$ gets
\begin{eqnarray*}
\sum_{j=1}^{r}\sum_{k_{1}\in \Bbb Z}\sum_{k_{2}\in \Bbb Z^{d}}\left|c_{j}(k_{1},k_{2})\phi_{j}(x-k_{1},y-k_{2})\right|&\leq& \sum_{j=1}^{r}\|c_{j}\|_\infty\sum_{k_{1}\in \Bbb Z}\sum_{k_{2}\in \Bbb Z^{d}}|\phi_{j}(x-k_{1},y-k_{2})|\\
&\leq&\sum_{j=1}^{r}\|c_{j}\|_\infty\| \phi_{j} \|_{W(L^{1,1})}\\
&\leq&\left(\sum_{j=1}^{r}\|c_{j}\|_\infty^{2}\right)^{1/2}\left(\sum_{j=1}^{r}\|\phi_{j}\|^{2}_{W(L^{1,1})}\right)^{1/2}\\
&=&\left(\sum_{j=1}^{r}\|c_{j}\|_\infty^{2}\right)^{1/2}\|\Phi\|_{W(L^{1,1})^{(r)}}<\infty\,(a.e.).
 \end{eqnarray*}

The following theorem gives that the multiply generated shift-invariant space is well-defined in $L^{p,q}$.

\begin{theorem}\label{thm:stableup}
Assume that $1\leq p,q<\infty $ and $\Phi=(\phi_{1},\phi_{2},\cdots,\phi_{r})^{T}\in W(L^{1,1})^{(r)}$.
Then for any $C=(c_{1},c_{2},\cdots,c_{r})^{T}\in (\ell^{p,q})^{(r)}$, the function
\[
f=\sum_{j=1}^{r}\sum_{k_{1}\in \Bbb Z}\sum_{k_{2}\in \Bbb Z^d}c_{j}(k_{1},k_{2})\phi_{j}(\cdot-k_{1},\cdot-k_{2})
\]
 belongs to $L^{p,q}$ and
$$
\|f\|_{L^{p,q}}\leq\|C\|_{(\ell^{p,q})^{(r)}}\left \| \Phi \right \|_{W(L^{1,1})^{(r)}}.
$$
\end{theorem}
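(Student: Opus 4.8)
The plan is to reduce Theorem \ref{thm:stableup} to the already-established single-generator case, namely Proposition \ref{pro:stableup1}. Write $f=\sum_{j=1}^{r}f_{j}$, where $f_{j}=\sum_{k_{1}\in\mathbb{Z}}\sum_{k_{2}\in\mathbb{Z}^{d}}c_{j}(k_{1},k_{2})\phi_{j}(\cdot-k_{1},\cdot-k_{2})$. The almost-everywhere convergence of the defining triple sum has already been verified in the excerpt above, so rearranging the sum into $\sum_{j}f_{j}$ is legitimate. By Proposition \ref{pro:stableup1} applied to each $\phi_{j}\in W(L^{1,1})$ and each $c_{j}\in\ell^{p,q}$, we get $f_{j}\in L^{p,q}$ with $\|f_{j}\|_{L^{p,q}}\le\|c_{j}\|_{\ell^{p,q}}\|\phi_{j}\|_{W(L^{1,1})}$. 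Summing over $j$ and using the triangle inequality in $L^{p,q}$ (which is a genuine norm for $1\le p,q<\infty$, since $L^{p,q}$ is a Banach space) yields $f\in L^{p,q}$ and $\|f\|_{L^{p,q}}\le\sum_{j=1}^{r}\|c_{j}\|_{\ell^{p,q}}\|\phi_{j}\|_{W(L^{1,1})}$.

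It then remains to pass from this ``$\ell^{1}$-type'' bound in $j$ to the claimed ``$\ell^{2}$-type'' bound. First I would apply the Cauchy--Schwarz inequality in the finite index set $\{1,\dots,r\}$:
\[
\sum_{j=1}^{r}\|c_{j}\|_{\ell^{p,q}}\|\phi_{j}\|_{W(L^{1,1})}\le\left(\sum_{j=1}^{r}\|c_{j}\|_{\ell^{p,q}}^{2}\right)^{1/2}\left(\sum_{j=1}^{r}\|\phi_{j}\|_{W(L^{1,1})}^{2}\right)^{1/2}.
\]
By the definitions of the product norms $\|\cdot\|_{(\ell^{p,q})^{(r)}}$ and $\|\cdot\|_{W(L^{1,1})^{(r)}}$ given just before the theorem, the right-hand side is exactly $\|C\|_{(\ell^{p,q})^{(r)}}\|\Phi\|_{W(L^{1,1})^{(r)}}$, which closes the estimate.

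This argument is essentially a soft decomposition plus two standard inequalities, so there is no deep obstacle; the only point requiring a little care is the justification that the triple-indexed series may be grouped as $\sum_{j}f_{j}$ and that each $f_{j}$ is well-defined as an element of $L^{p,q}$ (as opposed to merely an a.e.-defined function) before invoking the triangle inequality — but this is exactly what Proposition \ref{pro:stableup1} supplies for each fixed $j$, and the a.e.\ absolute convergence computed in the paragraph preceding the theorem guarantees the rearrangement is valid pointwise a.e. So the main ``work'' is simply bookkeeping: applying Proposition \ref{pro:stableup1} termwise, summing, and then invoking Cauchy--Schwarz to convert the resulting $\ell^{1}$ sum over $j$ into the product of the two $\ell^{2}$-norms that define the norms on $(\ell^{p,q})^{(r)}$ and $W(L^{1,1})^{(r)}$.
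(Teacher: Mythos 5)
Your proposal is correct and follows essentially the same route as the paper: decompose $f=\sum_{j}f_{j}$, apply Proposition \ref{pro:stableup1} to each single-generator piece, use the triangle inequality in $L^{p,q}$, and finish with Cauchy--Schwarz over the finite index $j$ to recover the product of the two $\ell^{2}$-type norms. Your additional remarks on the a.e.\ absolute convergence justifying the regrouping only make the argument slightly more careful than the paper's.
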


\begin{proof}
Since $\|\cdot\|_{L^{p,q}}$ is a norm, one has
\begin{eqnarray}\label{eq:sanjiaobufengshi}
\|f\|_{L^{p,q}}&=&\left\|\sum_{k_{1}\in \Bbb Z}\sum_{k_{2}\in \Bbb Z^d}c_{j}(k_{1},k_{2})\phi_{j}(\cdot-k_{1},\cdot-k_{2})\right\|_{L^{p,q}}\nonumber\\
&\leq&\sum_{j=1}^{r}\left\|\sum_{k_{1}\in \Bbb Z}\sum_{k_{2}\in \Bbb Z^d}c_{j}(k_{1},k_{2})\phi_{j}(\cdot-k_{1},\cdot-k_{2})\right\|_{L^{p,q}}.
\end{eqnarray}
By Proposition \ref{pro:stableup1}, for any $1\leq j\leq r$
\begin{equation}\label{eq:danshengcheng}
\left\|\sum_{k_{1}\in \Bbb Z}\sum_{k_{2}\in \Bbb Z^d}c_{j}(k_{1},k_{2})\phi_{j}(\cdot-k_{1},\cdot-k_{2})\right\|_{L^{p,q}}\leq\|c_{j}\|_{\ell^{p,q}}\left \| \phi_{j} \right \|_{W(L^{1,1})}.
\end{equation}
Therefore, combining (\ref{eq:sanjiaobufengshi}) and (\ref{eq:danshengcheng}), one has
\begin{eqnarray*}
\|f\|_{L^{p,q}}&\leq&\sum_{j=1}^{r}\|c_{j}\|_{\ell^{p,q}}\left \| \phi_{j} \right \|_{W(L^{1,1})}\\
&\leq& \left(\sum_{j=1}^{r}\|c_{j}\|^{2}_{\ell^{p,q}}\right)^{1/2}
\left(\sum_{j=1}^{r}\left \| \phi_{j} \right \|^{2}_{W(L^{1,1})}\right)^{1/2}\\
&\leq&\|C\|_{(\ell^{p,q})^{(r)}}\left \| \Phi \right \|_{W(L^{1,1})^{(r)}}.
\end{eqnarray*}
\end{proof}

In order to obtain the main result in this section, we need to introduce the following two propositions.

\begin{proposition}\cite[Theorem 3.3]{Jia1991}\label{pro:stable if and only if}
Assume that $\phi \in W\left ( L^{1,1} \right )$. Then $\phi$ satisfies
\[
\sum_{k\in\mathbb{Z}^{d+1}}\left | \widehat{\phi}\left ( \xi +2\pi k \right )  \right |^{2}> 0, \quad \xi \in \mathbb{R}^{d+1},
\]
if and only if
there exists a function $g=\sum_{k_{1}\in \Bbb Z}\sum_{k_{2}\in \Bbb Z^d}d(k_{1},k_{2})\phi(\cdot-k_{1},\cdot-k_{2})$ such that
\[
\left \langle \phi\left ( \cdot -\alpha  \right ) , g \right \rangle=\delta _{0,\alpha }.
\] 
Here $d=\{d(k_{1},k_{2}):k_{1}\in \Bbb Z,k_{2}\in \Bbb Z^d\}\in \ell^{1}$.
\end{proposition}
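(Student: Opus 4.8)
The plan is to transfer the problem to the torus $\mathbb{T}^{d+1}$ and reduce the equivalence to Wiener's lemma for absolutely convergent Fourier series. Introduce the autocorrelation sequence
\[
a(m):=\langle\phi(\cdot),\phi(\cdot-m)\rangle=\int_{\mathbb{R}^{d+1}}\phi(y)\overline{\phi(y-m)}\,dy,\qquad m\in\mathbb{Z}^{d+1},
\]
and its symbol $\widehat{a}(\xi):=\sum_{m\in\mathbb{Z}^{d+1}}a(m)e^{-im\cdot\xi}$. The first thing I would establish is $a\in\ell^{1}(\mathbb{Z}^{d+1})$, which needs only $\phi\in W(L^{1,1})$: from $|a(m)|\le\int|\phi(y)|\,|\phi(y-m)|\,dy$, summing over $m$ and using the amalgam bound $\sum_{m\in\mathbb{Z}^{d+1}}|\phi(y-m)|\le\|\phi\|_{W(L^{1,1})}$ (the estimate already carried out just before Proposition \ref{pro:stableup1}), one gets $\sum_{m}|a(m)|\le\|\phi\|_{L^{1}}\,\|\phi\|_{W(L^{1,1})}\le\|\phi\|_{W(L^{1,1})}^{2}<\infty$. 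Hence $\widehat{a}$ is a continuous $2\pi\mathbb{Z}^{d+1}$-periodic function, i.e.\ an element of the Wiener algebra of absolutely convergent Fourier series on $\mathbb{T}^{d+1}$. Since $a(m)=(\phi\ast\widetilde{\overline{\phi}})(m)$ with $\widetilde{\overline{\phi}}(x)=\overline{\phi(-x)}$, Poisson summation identifies $\widehat{a}$ with the periodization $\sum_{k\in\mathbb{Z}^{d+1}}|\widehat{\phi}(\cdot+2\pi k)|^{2}$ appearing in (\ref{eq:dayuling}) (up to the positive constant fixed by the Fourier convention); in particular $\widehat{a}$ is real-valued and non-negative, and (\ref{eq:dayuling}) says exactly that $\widehat{a}(\xi)>0$ for all $\xi$.

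For the implication (\ref{eq:dayuling}) $\Rightarrow$ existence of $g$: since $\widehat{a}$ is continuous and $\mathbb{T}^{d+1}$ is compact, $\widehat{a}$ is bounded below by a positive constant, so $1/\widehat{a}$ is continuous, and Wiener's lemma produces $d\in\ell^{1}(\mathbb{Z}^{d+1})$ with $\sum_{k}d(k)e^{-ik\cdot\xi}=1/\widehat{a}(\xi)$. Using the elementary embedding $\ell^{1}(\mathbb{Z}^{d+1})\subset\ell^{p,q}(\mathbb{Z}^{d+1})$ (which follows from $\|d\|_{\ell^{p,q}}\le\|d\|_{\ell^{1}}$), the function $g:=\sum_{k_{1}\in\mathbb{Z}}\sum_{k_{2}\in\mathbb{Z}^{d}}d(k_{1},k_{2})\phi(\cdot-k_{1},\cdot-k_{2})$ is well defined and, by Proposition \ref{pro:stableup1}, lies in $V_{p,q}(\phi)$. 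It remains to check biorthogonality: interchanging sum and integral (legitimate because $\sum_{k}|d(k)|\,|a(k-\alpha)|\le\|d\|_{\ell^{1}}\,\|\phi\|_{L^{2}}^{2}<\infty$) gives
\[
\langle\phi(\cdot-\alpha),g\rangle=\sum_{k\in\mathbb{Z}^{d+1}}\overline{d(k)}\,a(k-\alpha),\qquad\alpha\in\mathbb{Z}^{d+1},
\]
and a short Fourier-series computation in the variable $\alpha$ identifies the symbol of the right-hand side with $\big(\sum_{k}\overline{d(k)}\,e^{-ik\cdot\xi}\big)\big(\sum_{m}a(m)\,e^{im\cdot\xi}\big)$, which by the choice of $d$ and the reality of $\widehat{a}$ equals the constant $1$; therefore $\langle\phi(\cdot-\alpha),g\rangle=\delta_{0,\alpha}$.

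For the converse, assume some $g=\sum_{k}d(k)\phi(\cdot-k)$ with $d\in\ell^{1}$ satisfies $\langle\phi(\cdot-\alpha),g\rangle=\delta_{0,\alpha}$. The identity above now reads $\delta_{0}(\alpha)=\sum_{k}\overline{d(k)}\,a(k-\alpha)$, an equality of $\ell^{1}$ sequences; passing to symbols shows that $\widehat{a}$ admits a multiplicative inverse in the Wiener algebra, hence $\widehat{a}(\xi)\neq0$ for all $\xi$. Since $\widehat{a}$ coincides (up to a positive constant) with $\sum_{k}|\widehat{\phi}(\cdot+2\pi k)|^{2}\ge0$, this forces $\sum_{k}|\widehat{\phi}(\xi+2\pi k)|^{2}>0$ for all $\xi$, which is (\ref{eq:dayuling}).

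I expect the main obstacle to be the first paragraph, namely showing $a\in\ell^{1}$ and that $\widehat{a}$ is the periodization in (\ref{eq:dayuling}): this is precisely where the hypothesis $\phi\in W(L^{1,1})$ is used essentially (a weaker amalgam space would not force $a\in\ell^{1}$), and it requires the amalgam summation estimate together with a careful justification of Poisson summation for $\phi\ast\widetilde{\overline{\phi}}$. Once $\widehat{a}$ is known to lie in the Wiener algebra and to agree (up to a positive constant) with $\sum_{k}|\widehat{\phi}(\cdot+2\pi k)|^{2}$, the remainder is a routine application of Wiener's $1/f$ lemma and the embedding $\ell^{1}\subset\ell^{p,q}$.
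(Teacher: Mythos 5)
The paper does not prove this proposition at all: it is quoted verbatim as \cite[Theorem 3.3]{Jia1991}, so there is no in-paper argument to compare against. Your blind proof is, however, a correct and essentially self-contained derivation of the cited result, and it follows the classical route (the same one used in Jia--Micchelli and in the Aldroubi--Gr\"ochenig literature): pass to the autocorrelation $a(m)=\langle\phi,\phi(\cdot-m)\rangle$, show $a\in\ell^{1}$ from the amalgam bound $\sum_{m}|\phi(y-m)|\le\|\phi\|_{W(L^{1,1})}$ together with $\|\phi\|_{L^{1}}\le\|\phi\|_{W(L^{1,1})}$, identify the symbol $\widehat{a}$ with the periodization in (\ref{eq:dayuling}), and then run Wiener's $1/f$ lemma in both directions. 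The computations you sketch (the identity $\langle\phi(\cdot-\alpha),g\rangle=\sum_{k}\overline{d(k)}\,a(k-\alpha)$, the multiplication of symbols, and the Hermitian symmetry $a(-m)=\overline{a(m)}$ making $\widehat{a}$ real) all check out. Two small points deserve explicit care beyond what you wrote: (i) the cleanest justification of the "Poisson summation" step is to compute the Fourier coefficients of the $L^{1}(\mathbb{T}^{d+1})$ periodization $\sum_{k}|\widehat{\phi}(\cdot+2\pi k)|^{2}$ via Plancherel and observe they form the $\ell^{1}$ sequence $a$, so the periodization agrees a.e.\ with the continuous function $\widehat{a}$ — this sidesteps pointwise Poisson summation entirely; and (ii) since the periodization is a priori only defined a.e., the everywhere-positivity in (\ref{eq:dayuling}) must be read as positivity of that continuous representative $\widehat{a}$, which is the standard convention and the one your argument implicitly uses. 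With those caveats the proof is complete and correct.
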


\begin{proposition}\cite[Lemma 3.3]{LiLiu}\label{lem:gW}
The function $g$ in Proposition \ref{pro:stable if and only if} belongs to $W(L^{1,1})$.
\end{proposition}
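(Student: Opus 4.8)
The plan is to establish the $\ell^{1}$--convolution--module estimate
\[
\|d *_{sd}\phi\|_{W(L^{1,1})}\ \le\ \|d\|_{\ell^{1}}\,\|\phi\|_{W(L^{1,1})}
\]
for all $d\in\ell^{1}(\mathbb{Z}^{d+1})$ and $\phi\in W(L^{1,1})$; applied to the sequence $d\in\ell^{1}$ and the generator $\phi\in W(L^{1,1})$ of Proposition \ref{pro:stable if and only if}, it gives $g=d*_{sd}\phi\in W(L^{1,1})$ immediately. Note that $\ell^{1,1}=\ell^{1}$ with equal norms, so $g$ is well defined; it is convenient to bound $|g|$ by the everywhere--defined majorant $M(x,y):=\sum_{m_{1}\in\mathbb{Z}}\sum_{m_{2}\in\mathbb{Z}^{d}}|d(m_{1},m_{2})|\,|\phi(x-m_{1},y-m_{2})|$ and to work with $M$ throughout. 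Recall also that, for $p=q=1$, $\|f\|_{W(L^{1,1})}=\sum_{n\in\mathbb{Z}}\sup_{x\in[0,1]}\sum_{l\in\mathbb{Z}^{d}}\sup_{y\in[0,1]^{d}}|f(x+n,y+l)|$, so the estimate reduces to a careful bookkeeping with suprema and sums of nonnegative quantities.

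Concretely, fix $n\in\mathbb{Z}$ and $l\in\mathbb{Z}^{d}$. Since the supremum of a sum of nonnegative terms is at most the sum of the suprema, for $x\in[0,1]$
\[
\sup_{y\in[0,1]^{d}}M(x+n,y+l)\ \le\ \sum_{m_{1}\in\mathbb{Z}}\sum_{m_{2}\in\mathbb{Z}^{d}}|d(m_{1},m_{2})|\,\sup_{y\in[0,1]^{d}}|\phi(x+n-m_{1},\,y+l-m_{2})|.
\]
Summing over $l\in\mathbb{Z}^{d}$ and using that $l\mapsto l-m_{2}$ is a bijection of $\mathbb{Z}^{d}$ for each fixed $m_{2}$, the inner $l$--sum becomes independent of $m_{2}$ and equals $\Psi(x+n-m_{1})$, where $\Psi(u):=\sum_{l\in\mathbb{Z}^{d}}\sup_{y\in[0,1]^{d}}|\phi(u,y+l)|$; hence, with $D(m_{1}):=\sum_{m_{2}\in\mathbb{Z}^{d}}|d(m_{1},m_{2})|$ so that $\sum_{m_{1}}D(m_{1})=\|d\|_{\ell^{1}}$,
\[
\sum_{l\in\mathbb{Z}^{d}}\sup_{y\in[0,1]^{d}}M(x+n,y+l)\ \le\ \sum_{m_{1}\in\mathbb{Z}}D(m_{1})\,\Psi(x+n-m_{1}).
\]
Taking $\sup_{x\in[0,1]}$ (which again passes inside the $m_{1}$--sum), then summing over $n\in\mathbb{Z}$, interchanging $\sum_{n}$ and $\sum_{m_{1}}$ by Tonelli, and substituting $n\mapsto n+m_{1}$ gives
\[
\|g\|_{W(L^{1,1})}\ \le\ \sum_{m_{1}\in\mathbb{Z}}D(m_{1})\sum_{n\in\mathbb{Z}}\sup_{x\in[0,1]}\Psi(x+n)\ =\ \|d\|_{\ell^{1}}\,\|\phi\|_{W(L^{1,1})}\ <\ \infty,
\]
because $\sum_{n\in\mathbb{Z}}\sup_{x\in[0,1]}\Psi(x+n)=\|\phi\|_{W(L^{1,1})}$ by the definition of $\Psi$. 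This proves $g\in W(L^{1,1})$.

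I do not anticipate a real obstacle: the argument only uses the elementary inequality ``$\sup$ of a nonnegative sum $\le$ sum of $\sup$s'' (to move every supremum inside the series), the Tonelli interchange of the two outer sums, and the two bijective re-indexings $l\mapsto l-m_{2}$ on $\mathbb{Z}^{d}$ (which decouples the $m_{2}$--sum) and $n\mapsto n+m_{1}$ on $\mathbb{Z}$ (which makes the first--coordinate translations telescope after summation) --- all legitimate because every quantity in sight is nonnegative.
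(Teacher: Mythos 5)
Your argument is correct: the estimate $\|d*_{sd}\phi\|_{W(L^{1,1})}\le\|d\|_{\ell^{1}}\|\phi\|_{W(L^{1,1})}$ is exactly the standard $\ell^{1}$--module property of $W(L^{1,1})$ under semi-discrete convolution, and every step (sup of a nonnegative sum $\le$ sum of sups, Tonelli, the two re-indexings) is legitimate. The paper itself offers no proof --- it simply cites Lemma~3.3 of the reference --- and that cited lemma is proved by essentially this same convolution estimate, so your proposal matches the intended argument.
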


\begin{theorem}\label{thm:stable}
For $1< p,q<\infty $. Assume that $\Phi=(\phi_{1},\phi_{2},\cdots,\phi_{r})^{T}\in W(L^{1})^{(r)}$ satisfies
\[
A\leq[\widehat{\Phi},\widehat{\Phi}](\xi), \quad \xi \in \mathbb{R}^{d+1},
\]
where $A>0$ and $[\widehat{\Phi},\widehat{\Phi}](\xi)=\left(\sum_{k\in\bbZ^{d+1}}\widehat{\phi}_{j}(\xi+2k\pi)\overline{\widehat{\phi}_{j'}(\xi+2k\pi)}\right)_{1\leq j\leq r,1\leq j'\leq r}$.
Then for any $C=(c_{1},c_{2},\cdots,c_{r})^{T}\in (\ell^{p,q})^{(r)}$ and $f=\sum_{j=1}^{r}\sum_{k_{1}\in \Bbb Z}\sum_{k_{2}\in \Bbb Z^d}c_{j}(k_{1},k_{2})\phi_{j}(\cdot-k_{1},\cdot-k_{2})$, one has
$$
\|C\|_{(\ell^{p,q})^{(r)}}\approx\|f\|_{L^{p,q}}\approx \|f\|_{W(L^{p,q})}.
$$
\end{theorem}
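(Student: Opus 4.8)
The plan is to reduce the vector-valued statement of Theorem~\ref{thm:stable} to the scalar norm-equivalence already available in Proposition~\ref{pro:stable} and Proposition~\ref{pro:stableup1}, by exhibiting a suitable \emph{single} generator whose shift-invariant space captures all the information of $V_{p,q}(\Phi)$. First I would reorganize the $(d+1)$-dimensional lattice and the $r$ generators into one larger-dimensional picture: define $\psi$ on $\mathbb{R}^{d+1}$ built from $\phi_1,\dots,\phi_r$ so that the map $C=(c_1,\dots,c_r)^T\mapsto f=\sum_j c_j*_{sd}\phi_j$ becomes a single semi-discrete convolution $c*_{sd}\psi$ with $c$ an interleaving of the $c_j$'s. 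The lower bound hypothesis $A\le[\widehat{\Phi},\widehat{\Phi}](\xi)$ on the Gram matrix is exactly what one needs to verify that this $\psi$ satisfies the scalar condition~(\ref{eq:dayuling}); concretely, the smallest eigenvalue of $[\widehat{\Phi},\widehat{\Phi}](\xi)$ being bounded below by $A>0$ forces $\sum_{k}|\widehat{\psi}(\xi+2\pi k)|^2>0$ for all $\xi$. Since $\Phi\in W(L^1)^{(r)}\subset W(L^{1,1})^{(r)}$, the constructed $\psi$ lies in $W(L^{1,1})$, so Proposition~\ref{pro:stable} applies and yields $\|c\|_{\ell^{p,q}}\approx\|f\|_{L^{p,q}}$, which after unwinding the interleaving is precisely $\|C\|_{(\ell^{p,q})^{(r)}}\approx\|f\|_{L^{p,q}}$.

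For the equivalence $\|f\|_{L^{p,q}}\approx\|f\|_{W(L^{p,q})}$ I would argue as follows. The inequality $\|f\|_{L^{p,q}}\lesssim\|f\|_{W(L^{p,q})}$ is immediate from the pointwise domination of the $L^{p,q}$ integrand by the $W(L^{p,q})$ sampling quantities (each local sup over a unit cell dominates the local integral). For the reverse direction, I would estimate the local suprema of $f$ over cells $[n,n+1]\times[l,l+1]^d$ in terms of finitely many neighboring coefficients: using $|f(x,y)|\le\sum_{k_1,k_2}|c(k_1,k_2)|\,|\phi(x-k_1,y-k_2)|$ together with $\phi\in W(L^{1,1})$, one controls $\sup_{[n,n+1]\times[l,l+1]^d}|f|$ by a discrete convolution of $|c|$ with the Wiener-amalgam ``envelope'' sequence of $\phi$, which lies in $\ell^{1,1}$. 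Young's inequality for the mixed sequence spaces $\ell^{p,q}*\ell^{1,1}\hookrightarrow\ell^{p,q}$ then gives $\|f\|_{W(L^{p,q})}\lesssim\|c\|_{\ell^{p,q}}$, and combining with the already-established $\|c\|_{\ell^{p,q}}\approx\|f\|_{L^{p,q}}$ closes the chain. Here I would again pass through the single-generator $\psi$ so that only the scalar Wiener-amalgam estimates are needed.

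In carrying this out I would invoke Proposition~\ref{pro:stable if and only if} and Proposition~\ref{lem:gW} to obtain, for the scalar generator $\psi$, a dual generator $g\in W(L^{1,1})$ with $\langle\psi(\cdot-\alpha),g\rangle=\delta_{0,\alpha}$; this is the standard device for recovering the coefficient sequence from $f$ via $c(\alpha)=\langle f,g(\cdot-\alpha)\rangle$, which is what underlies the lower bound $\|c\|_{\ell^{p,q}}\lesssim\|f\|_{L^{p,q}}$ in Proposition~\ref{pro:stable} and which I would reuse directly rather than reprove. The main obstacle I anticipate is purely bookkeeping: setting up the interleaving $\psi$ and the corresponding lattice identification so that the Gram-matrix lower bound $A\le[\widehat{\Phi},\widehat{\Phi}](\xi)$ translates cleanly into the scalar positivity condition~(\ref{eq:dayuling}) for $\psi$, and checking that the norm $\|C\|_{(\ell^{p,q})^{(r)}}=\big(\sum_j\|c_j\|_{\ell^{p,q}}^2\big)^{1/2}$ — which uses an $\ell^2$ aggregation over $j$ — is comparable to $\|c\|_{\ell^{p,q}}$ for the interleaved sequence. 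Since $r$ is a fixed finite number, all such comparisons cost only $r$-dependent constants, so no genuine analytic difficulty arises beyond the scalar results already cited.
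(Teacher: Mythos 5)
Your central move --- replacing the $r$ generators by a single $\psi$ so that $C\mapsto f$ becomes one semi-discrete convolution $c*_{sd}\psi$ over $\mathbb{Z}^{d+1}$, and then invoking Proposition \ref{pro:stable} --- has a genuine gap: $\psi$ is never constructed, and no such $\psi$ exists in general. If $\psi$ uses the same integer lattice, then the Fourier fibres of $V_{2}(\psi)$ have dimension at most $1$ almost everywhere, whereas the hypothesis $A\leq[\widehat{\Phi},\widehat{\Phi}](\xi)$ forces the fibres of $V_{2}(\Phi)$ to have dimension exactly $r$; so for $r\geq 2$ no interleaving of the coefficients can realize $\sum_{j}c_{j}*_{sd}\phi_{j}$ as $c*_{sd}\psi$. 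If instead you refine the lattice (say to $\frac{1}{r}\mathbb{Z}\times\mathbb{Z}^{d}$) you are implicitly assuming the $\phi_{j}$ are sub-lattice shifts of one function, which is not implied by the hypotheses; and if you lift to an extra variable, the slice recovering $f$ is only one of many, the other slices carrying index-shifted cross terms $\sum_{j}c_{j-m}*_{sd}\phi_{j}$, so controlling $\|C\|_{(\ell^{p,q})^{(r)}}$ by $\|f\|_{L^{p,q}}$ rather than by the norm of the whole lifted function is exactly the inequality you are trying to prove. Consequently the claim that the Gram lower bound ``translates into'' (\ref{eq:dayuling}) for $\psi$ is asserted for an object that is not defined, and this is where the proof would fail.

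The paper does not reduce to the scalar case. It keeps all $r$ generators: the matrix inequality $A\leq[\widehat{\Phi},\widehat{\Phi}]$ yields (\ref{eq:dayuling}) for each diagonal entry, hence for each $\phi_{j}$ separately; Propositions \ref{pro:stable if and only if} and \ref{lem:gW} then give duals $g_{j}\in W(L^{1,1})$ with $\left\langle\phi_{j}(\cdot-\alpha),g_{j}\right\rangle=\delta_{0,\alpha}$, the coefficients are recovered as $c_{j}(k_{1},k_{2})=\left\langle f,g_{j}(\cdot-k_{1},\cdot-k_{2})\right\rangle$, and the bound $\|C\|_{(\ell^{p,q})^{(r)}}\lesssim\|f\|_{L^{p,q}}$ follows by pairing $C$ against an arbitrary $B\in(\ell^{p',q'})^{(r)}$ and applying Theorem \ref{thm:stableup} to $\sum_{j}b_{j}*_{sd}g_{j}$. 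Your outline of the second equivalence $\|f\|_{L^{p,q}}\approx\|f\|_{W(L^{p,q})}$ (one direction by pointwise domination, the other via $\|f\|_{W(L^{p,q})}\lesssim\|C\|_{(\ell^{p,q})^{(r)}}\approx\|f\|_{L^{p,q}}$) does match the paper's argument, but it presupposes the first equivalence, which your proposal does not establish.
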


\begin{proof}
We first prove $\|C\|_{(\ell^{p,q})^{(r)}}\approx\|f\|_{L^{p,q}}.$

By Theorem \ref{thm:stableup}, we have
\[
\|f\|_{L^{p,q}}\lesssim\|C\|_{(\ell^{p,q})^{(r)}}.
\]
Thus we only need to obtain $\|C\|_{(\ell^{p,q})^{(r)}}\lesssim\|f\|_{L^{p,q}}$. Since for any $\xi\in\mathbb{R}^{d+1}$, $
A\leq[\widehat{\Phi},\widehat{\Phi}](\xi)$, we have
\[
\sum_{k\in\mathbb{Z}^{d+1}}\left | \widehat{\phi}_{j}\left ( \xi +2\pi k \right )  \right |^{2}> 0, \quad \xi \in \mathbb{R}^{d+1}, 1\leq j\leq r.
\]
By Proposition \ref{pro:stable if and only if} and Proposition \ref{lem:gW}, for each $\phi_{j}\,(1\leq j\leq r)$, there is a function $g_{j}\in W(L^{1,1})$ such that
\[
\left \langle \phi_{j}\left ( \cdot -\alpha  \right ) , g_{j} \right \rangle=\delta _{0,\alpha }.
\]
Therefore, for any $1\leq j\leq r$, $k_{1}\in\bbZ$ and $k_{2}\in\bbZ^{d}$
$$c_{j}(k_{1},k_{2})=\int_{\bbR}\int_{\bbR^d}f(x,y)\overline{g_{j}\left ( x-k_{1},y-k_{2} \right )}dxdy.$$
 Putting $B=(b_{1},b_{2},\cdots,b_{r})^{T}\in\left(\ell^{p',q'}\right)^{(r)}$ with $\frac{1}{p}+\frac{1}{p'}=1$ and $\frac{1}{q}+\frac{1}{q'}=1$. Then we have
\begin{eqnarray*}
\left | \left \langle C,B \right \rangle \right |&=&\left | \sum_{j=1}^{r}\sum_{k_{1}\in\mathbb{Z},k_{2}\in\mathbb{Z}^d} c_{j}(k_{1},k_{2}) \overline{b_{j}(k_{1},k_{2})}  \right |\\
&=&\left |\sum_{j=1}^{r}\sum_{k_{1}\in\mathbb{Z},k_{2}\in\mathbb{Z}^d}\overline{b_{j}(k_{1},k_{2})}\int_{\bbR}\int_{\bbR^d}f(x,y)
\overline{g_{j}\left ( x-k_{1},y-k_{2} \right )}dxdy\right |\\
&=&\left |\int_{\bbR}\int_{\bbR^d}f(x,y)\sum_{j=1}^{r}\sum_{k_{1}\in\mathbb{Z},k_{2}\in\mathbb{Z}^d}
\overline{b_{j}(k_{1},k_{2})}\overline{g_{j}\left ( x-k_{1},y-k_{2} \right )}dxdy\right|.
\end{eqnarray*}
By Theorem \ref{thm:stableup}, we have \begin{eqnarray*}
\left | \left \langle C,B \right \rangle \right |&\leq &\left \| f \right \|_{L^{p,q}}\left \| \sum_{j=1}^{r}\sum_{k_{1}\in\mathbb{Z},k_{2}\in\mathbb{Z}^d}b_{j}(k_{1},k_{2})g_{j}\left ( x-k_{1},y-k_{2} \right ) \right \|_{L^{p',q'}}\\
&\leq &\left \| f  \right \|_{L^{p,q}}\left \| B \right \|_{\left(\ell^{p',q'}\right)^{(r)}}\left \| G \right \|_{W(L^{1,1})^{(r)}}.
\end{eqnarray*}
Here $G=(g_{1},g_{2},\cdots,g_{r})^{T}\in W(L^{1,1})^{(r)}$.
Therewith
\begin{eqnarray}\label{eq:3.2}
\left \| C  \right \|_{(\ell^{p,q})^{(r)}}\leq \left \| f \right \|_{L^{p,q}}\left \| G \right \|_{W(L^{1,1})^{(r)}},
\end{eqnarray}
namely $\left \| C  \right \|_{(\ell^{p,q})^{(r)}}\lesssim\|f\|_{L^{p,q}}.$
Hence, we get $\|C\|_{(\ell^{p,q})^{(r)}}\approx\|f\|_{L^{p,q}}$.

Next, we prove $\|f\|_{L^{p,q}}\approx \|f\|_{W(L^{p,q})}.$

From proof of \cite[Theorem 3.4]{LiLiu}, $\|f\|_{L^{p,q}}\lesssim\|f\|_{W(L^{p,q})}$ is a well known fact.

Conversely, by Proposition \ref{pro:stableup1}, $\|C\|_{(\ell^{p,q})^{(r)}}\approx\|f\|_{L^{p,q}}$ and  the triangle inequality of norm
\begin{eqnarray*}
\|f\|_{W(L^{p,q})}&=&\left\|\sum_{j=1}^{r}\sum_{k_{1}\in \Bbb Z}\sum_{k_{2}\in \Bbb Z^d}c_{j}(k_{1},k_{2})\phi_{j}(\cdot-k_{1},\cdot-k_{2})\right\|_{W(L^{p,q})}\\
&\leq&\sum_{j=1}^{r}\left\|\sum_{k_{1}\in \Bbb Z}\sum_{k_{2}\in \Bbb Z^d}c_{j}(k_{1},k_{2})\phi_{j}(\cdot-k_{1},\cdot-k_{2})\right\|_{W(L^{p,q})}\\
&\lesssim&\sum_{j=1}^{r}\left\|\sum_{k_{1}\in \Bbb Z}\sum_{k_{2}\in \Bbb Z^d}c_{j}(k_{1},k_{2})\phi_{j}(\cdot-k_{1},\cdot-k_{2})\right\|_{L^{p,q}}\\
&\leq&\sum_{j=1}^{r}\|c_{j}\|_{\ell^{p,q}}\|\phi_{j}\|_{W(L^{1,1})}\\
&\leq& \left(\sum_{j=1}^{r}\|c_{j}\|^{2}_{\ell^{p,q}}\right)^{1/2}
\left(\sum_{j=1}^{r}\left \| \phi_{j} \right \|^{2}_{W(L^{1,1})}\right)^{1/2}\\
&\leq&\|C\|_{(\ell^{p,q})^{(r)}}\left \| \Phi \right \|_{W(L^{1,1})^{(r)}}\\
&\lesssim&\|C\|_{(\ell^{p,q})^{(r)}}\lesssim\|f\|_{L^{p,q}}.
\end{eqnarray*}
Therefore, we have $\|f\|_{L^{p,q}}\approx \|f\|_{W(L^{p,q})}.$ This completes the proof.
\end{proof}

\section{Fast reconstruction algorithm}
\ \ \ \
In this section, we mainly discuss the reconstruction of the functions in principal shift-invariant spaces. The fast reconstruction algorithm is our main result. It allows to exactly reconstruct the signals $f$ in principal shift-invariant subspaces when the sampling set $X=\{(x_{j},y_{k}):k,j\in \mathbb{J}\}$ is sufficiently dense.

Before giving the main result of this paper, we give some definitions.

The following definition can separate sampling points.

\begin{definition}
A bounded uniform partition of unity $\{\beta_{j,k}\}_{j,k\in\mathbb{J}}$ associated to a strongly-separated
sampling set $X=\{(x_{j},y_{k}):j,k\in\mathbb{J}\}$ is a set of functions satisfying
\begin{enumerate}
  \item $0\leq\beta_{j,k}\leq1, \forall\,j,k\in\mathbb{J},$
  \item $\mathrm{supp}\beta_{j,k}\subset B_{\gamma}(x_{j},y_{k}),$
  \item $\sum_{j\in\mathbb{J}}\sum_{k\in\mathbb{J}}\beta_{j,k}=1$.
 \end{enumerate}
Here $B_{\gamma}(x_{j},y_{k})$ is the open ball with center $(x_{j},y_{k})$ and radius $\gamma$. 
\end{definition}

If $f\in W_{0}(L^{p,q})$, we define
\[
Q_{X}f=\sum_{j\in\mathbb{J}}\sum_{k\in\mathbb{J}}f(x_{j},y_{k})\beta_{j,k}
\]
for the quasi-interpolant of the sequence $c_{j,k}=f(x_{j},y_{k})$.

The following definition can describe the density of the sampling set $X$.

\begin{definition}
If a set $X=\{(x_{j},y_{k}):k,j\in \mathbb{J},x_{k}\in\mathbb{R},y_{j}\in\mathbb{R}^{d}\}$ satisfies
\[
\bbR^{d+1}=\cup_{j,k}B_{\gamma}(x_{j},y_{k})\quad\mbox{for every}\,\gamma>\gamma_{0},
\]
then we say that the set $X$ is $\gamma_{0}$-dense in $\bbR^{d+1}$.
Here $B_{\gamma}(x_{j},y_{k})$ is the open ball with center $(x_{j},y_{k})$ and radius $\gamma$, and $\mathbb{J}$ is a countable index set.
\end{definition}

The following is the main result of this paper, which gives a fast iterative algorithm to reconstruct $f\in V_{p,q}(\phi)$ from its samples  $\{f(x_{j},y_{k}):j,k\in\mathbb{J}\}.$

\begin{theorem}\label{th:suanfa}
Assume that $\phi\in W_{0}(L^{1,1})$ whose support is compact. Let $P$ is a bounded projection from $L^{p,q}$ onto $V_{p,q}(\phi)$. Then there is a density $\gamma>0\,(\gamma=\gamma(p,q,P))$ such that any $f\in V_{p,q}(\phi)$ can be reconstructed
from its samples $\{f(x_{j},y_{k}):(x_{j},y_{k})\in X\}$ on any $\gamma$-dense set $X=\{(x_{j},y_{k}):j,k\in\mathbb{J}\}$ by the following iterative algorithm:
\begin{eqnarray}\label{eq:iterative algorithm}
\left\{
\begin{array}{rl}f_{1}=&PQ_{X}f \\
 f_{n+1}=&PQ_{X}(f-f_{n})+f_{n}.\\
\end{array}\right.
\end{eqnarray}
The iterates $f_{n}$ converges to $f$  in $L^{p,q}$ norms  and  uniformly. 
Furthermore, the convergence is geometric, namely,
\[
\|f-f_{n}\|_{L^{p,q}}\leq M\alpha^{n}
\]
for some $\alpha=\alpha(\gamma)<1$ and $M<\infty.$
\end{theorem}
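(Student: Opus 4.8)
\,\,
The plan is to follow the classical frame-algorithm strategy (as in the work of Gr\"ochenig and Aldroubi--Gr\"ochenig), adapted to the mixed-norm setting $L^{p,q}$. The key is to show that if the sampling set $X$ is sufficiently dense, then $\operatorname{Id}-PQ_X$ is a contraction on the closed subspace $V_{p,q}(\phi)$. First I would establish the error estimate $\|f-Q_Xf\|_{L^{p,q}}\leq C\,\omega(\gamma)\,\|f\|_{W(L^{p,q})}$ for $f\in V_{p,q}(\phi)$, where $\omega$ is a modulus-of-continuity quantity tending to $0$ as $\gamma\to 0$. Because $\phi\in W_0(L^{1,1})$ has compact support and $f=c*_{sd}\phi$, the function $f$ is continuous; using $\sum_{j,k}\beta_{j,k}=1$ and $\operatorname{supp}\beta_{j,k}\subset B_\gamma(x_j,y_k)$, the difference $f(x,y)-(Q_Xf)(x,y)$ is pointwise controlled by $\operatorname{osc}_\gamma f(x,y):=\sup_{|(u,v)|\le\gamma}|f(x+u,y+v)-f(x,y)|$. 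One then bounds $\|\operatorname{osc}_\gamma f\|_{L^{p,q}}$ (equivalently $\|\operatorname{osc}_\gamma f\|_{W(L^{p,q})}$, since these norms are equivalent on $V_{p,q}(\phi)$ by Proposition~\ref{pro:stable}) by $\|c\|_{\ell^{p,q}}\,\|\operatorname{osc}_\gamma\phi\|_{W(L^{1,1})}$, arguing exactly as in the a.e.-convergence estimate already displayed in Section~2, with $\phi$ replaced by $\operatorname{osc}_\gamma\phi$; finally $\|\operatorname{osc}_\gamma\phi\|_{W(L^{1,1})}\to 0$ as $\gamma\to 0$ by the uniform continuity of $\phi$ on the (compact) set $\operatorname{supp}\phi$ together with dominated convergence for the Wiener-amalgam sum. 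Combining this with $\|c\|_{\ell^{p,q}}\approx\|f\|_{L^{p,q}}$ yields $\|f-Q_Xf\|_{L^{p,q}}\le C\,\omega(\gamma)\,\|f\|_{L^{p,q}}$.

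Next I would feed this into the iteration. Since $P$ is a bounded projection from $L^{p,q}$ onto $V_{p,q}(\phi)$ and $Pf=f$ for $f\in V_{p,q}(\phi)$, we have for such $f$
\[
\|f-PQ_Xf\|_{L^{p,q}}=\|P(f-Q_Xf)\|_{L^{p,q}}\le\|P\|\,\|f-Q_Xf\|_{L^{p,q}}\le\|P\|\,C\,\omega(\gamma)\,\|f\|_{L^{p,q}}.
\]
Choose $\gamma=\gamma(p,q,P)>0$ small enough that $\alpha:=\|P\|\,C\,\omega(\gamma)<1$; then $\|(\operatorname{Id}-PQ_X)|_{V_{p,q}(\phi)}\|_{L^{p,q}\to L^{p,q}}\le\alpha<1$. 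A straightforward induction on the recursion $f_{n+1}=PQ_X(f-f_n)+f_n$ shows $f-f_{n+1}=(\operatorname{Id}-PQ_X)(f-f_n)$, hence $f-f_n=(\operatorname{Id}-PQ_X)^{n-1}(f-f_1)$ with $f-f_1=(\operatorname{Id}-PQ_X)f$, so $\|f-f_n\|_{L^{p,q}}\le\alpha^{n}\|f\|_{L^{p,q}}=:M\alpha^n$, giving geometric convergence in $L^{p,q}$. For uniform convergence one notes that each $f_n\in V_{p,q}(\phi)$, so by Proposition~\ref{pro:stableup1} (or the equivalence $\|f\|_{L^{p,q}}\approx\|f\|_{W(L^{p,q})}$ on the subspace, which controls the sup-norm since $W(L^{p,q})\hookrightarrow L^\infty$ continuously) we get $\|f-f_n\|_\infty\lesssim\|f-f_n\|_{L^{p,q}}\to0$ geometrically as well.

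The main obstacle I anticipate is the first step: making the oscillation estimate genuinely work in the \emph{mixed}-norm Wiener amalgam space $W(L^{1,1})$, i.e.\ verifying that $\|\operatorname{osc}_\gamma\phi\|_{W(L^{1,1})}\to0$. The anisotropy means the ``osc'' must be taken over a ball in $\mathbb{R}^{d+1}$ while the amalgam norm treats the first and remaining variables with different (nested) summations, so one must be careful that the majorant $\sum_{n}\sup_{x\in[0,1]}\big[\sum_l\sup_{y\in[0,1]^d}|\operatorname{osc}_\gamma\phi(x+n,y+l)|\big]$ is finite (dominated by a fixed multiple of $\|\phi\|_{W(L^{1,1})}$ using the enlargement $[0,1]\rightsquigarrow[-1,2]$ type argument for $\gamma\le1$) and then tends to $0$ termwise, so that dominated convergence applies. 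Compact support of $\phi$ makes the sum finite and the termwise limit immediate, which is exactly why that hypothesis is imposed. The remaining ingredients — boundedness of $Q_X$ on $W(L^{p,q})$, the norm equivalences, and the elementary contraction/iteration bookkeeping — are routine given the propositions already proved.
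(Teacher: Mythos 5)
Your proposal follows essentially the same route as the paper: pointwise domination of $f-Q_Xf$ by $\operatorname{osc}_\gamma f$, the semi-discrete convolution bound $\operatorname{osc}_\gamma f\le |c|*_{sd}\operatorname{osc}_\gamma(\phi)$ combined with $\|c\|_{\ell^{p,q}}\approx\|f\|_{L^{p,q}}$ and $\|\operatorname{osc}_\gamma(\phi)\|_{W(L^{1,1})}\to0$ (which the paper proves by an $\epsilon/3$ tail-plus-uniform-continuity splitting rather than your dominated-convergence phrasing, but the content is the same), then the contraction estimate $\|f-PQ_Xf\|\le\|P\|\,\epsilon\,\|f\|$ and the standard iteration bookkeeping. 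The argument is correct and matches the paper's Lemmas \ref{pro:in Wiener space}--\ref{lem:oscillation operator}; your explicit remark that $W(L^{p,q})\hookrightarrow L^{\infty}$ yields the claimed uniform convergence is in fact a detail the paper's own proof omits.
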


Before proving Theorem \ref{th:suanfa}, we introduce some useful lemmas.

Let $f$ be a continuous function. We define the oscillation (or modulus of continuity) of $f$
by $\hbox{osc}_{\delta}(f)(x_{1},x_{2})=\sup_{|y_{1}|\leq\delta}\sup_{|y_{2}|\leq\delta|}|f(x_{1}+y_{1}, x_{2}+y_{2})-f(x_{1},x_{2})|$. 

\begin{lemma}\label{pro:in Wiener space}
Let $\phi\in W_{0}(L^{1,1})$ whose support is compact. Then there exists $\delta_{0}>0$ such that
 $\hbox{osc}_{\delta}(\phi)\in W_{0}(L^{1,1})$ for any $\delta<\delta_{0}$.
\end{lemma}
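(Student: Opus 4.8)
\,\,
The plan is to show that $\hbox{osc}_{\delta}(\phi)$ inherits compact support and continuity from $\phi$, and then to bound its $W(L^{1,1})$-norm by a quantity involving $\phi$ itself (not its oscillation), so that finiteness follows from $\phi\in W_{0}(L^{1,1})$. For the support: if $\mathrm{supp}\,\phi\subseteq B_{R}(0)$, then $\hbox{osc}_{\delta}(\phi)(x_{1},x_{2})=0$ whenever $(x_{1},x_{2})$ is at distance more than $\delta\sqrt{d+1}$ from $B_{R}(0)$, since then every point $(x_{1}+y_{1},x_{2}+y_{2})$ with $|y_{1}|\le\delta$, $|y_{2}|_{\infty}\le\delta$ lies outside $\mathrm{supp}\,\phi$; hence $\hbox{osc}_{\delta}(\phi)$ has compact support. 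For continuity: a compactly supported continuous function is uniformly continuous, and the oscillation of a uniformly continuous function is continuous (indeed $\hbox{osc}_{\delta}(\phi)$ is a supremum over a fixed compact parameter set of jointly continuous functions $(x,z)\mapsto|\phi(x+z)-\phi(x)|$, hence continuous in $x$); this needs no smallness of $\delta$, so I would fold it into the statement only via the next step.

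The heart of the argument is the norm estimate. Fix the lattice cube $Q_{n,l}=[0,1]\times[0,1]^{d}$ translated by $(n,l)$. On this cube,
$$\sup_{x_{1}\in[0,1]}\Big[\sum_{l}\sup_{x_{2}\in[0,1]^{d}}\big|\hbox{osc}_{\delta}(\phi)(x_{1}+n,x_{2}+l)\big|^{q}\Big]^{p/q}.$$
The key observation is that, for $\delta<1$, a point $(x_{1}+n+y_{1},x_{2}+l+y_{2})$ with $|y_{1}|,|y_{2}|_{\infty}\le\delta$ lies in one of the finitely many neighboring cubes $Q_{n+n',l+l'}$ with $n'\in\{-1,0,1\}$, $l'\in\{-1,0,1\}^{d}$. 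Therefore
$$\hbox{osc}_{\delta}(\phi)(x_{1}+n,x_{2}+l)\le 2\sum_{|n'|\le1}\sum_{|l'|_{\infty}\le1}\sup_{u_{1}\in[-1,2]}\sup_{u_{2}\in[-1,2]^{d}}\big|\phi(u_{1}+n+n',u_{2}+l+l')\big|,$$
which is a finite sum of shifted copies of the local-sup function defining $\|\phi\|_{W(L^{1,1})}$ (up to enlarging the local cube from $[0,1]$ to $[-1,2]$, which only changes the norm by a fixed constant, since $[-1,2]$ is covered by finitely many unit cubes). Substituting this pointwise bound and using the triangle inequality in $\ell^{q}$ (inner sum over $l$) and then in $\ell^{p}$ (outer sum over $n$), together with the translation-invariance of the $W(L^{1,1})$ structure under integer shifts, gives $\|\hbox{osc}_{\delta}(\phi)\|_{W(L^{1,1})}\le C_{d}\,\|\phi\|_{W(L^{1,1})}<\infty$, where $C_{d}$ depends only on $d$ (the number of neighbor cubes and the covering constant for $[-1,2]^{d+1}$). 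This is where one must be a little careful: the local supremum in the Wiener norm is over $[0,1]^{d+1}$, but the oscillation pulls in values over a slightly larger window, so one has to pass through the equivalent norm with an enlarged fundamental domain, a standard fact for Wiener amalgam spaces.

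Thus one may take any $\delta_{0}<1$ (or $\delta_{0}$ equal to whatever threshold the enlarged-window norm equivalence requires, e.g. $\delta_{0}=1$). The main obstacle, such as it is, is purely bookkeeping: correctly tracking which neighboring lattice cubes the $\delta$-perturbation can reach and absorbing the resulting overlap into a dimension-dependent constant, while being honest that the naive pointwise bound uses a local supremum over a window slightly larger than the unit cube and therefore requires the standard equivalence of Wiener-amalgam norms defined with different (bounded) fundamental domains. No genuine analytic difficulty arises beyond that; continuity and compact support of $\hbox{osc}_{\delta}(\phi)$ are immediate once uniform continuity of $\phi$ is invoked.
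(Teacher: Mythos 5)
Your proof is correct, and in its key estimate it takes a somewhat different (and cleaner) route than the paper. Both arguments start from the same pointwise bound $\hbox{osc}_{\delta}(\phi)(x)\leq\sup_{|y|\leq\delta}|\phi(x+y)|+|\phi(x)|$, but they diverge in how the shifted term is controlled. The paper treats the two variables asymmetrically: the shift $y_{1}$ in the first variable is absorbed using uniform continuity with the threshold $\epsilon_{0}=1$, i.e.\ $\sup_{|y_{1}|\leq\delta}|\phi(x_{1}+k_{1}+y_{1},x_{2}+k_{2})|\leq 1+|\phi(x_{1}+k_{1},x_{2}+k_{2})|$, and compact support is then essential to make the resulting additive constant $D_{0}$ (the number of lattice cubes meeting $\mathrm{supp}\,\phi$) finite; the shift $y_{2}$ is handled by enlarging the local window to $[-1,2]^{d}$. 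You instead treat both variables uniformly by observing that for $\delta<1$ the perturbed point lands in one of the $3^{d+1}$ neighboring lattice cubes, which yields the multiplicative bound $\|\hbox{osc}_{\delta}(\phi)\|_{W(L^{1,1})}\leq C_{d}\|\phi\|_{W(L^{1,1})}$ with $\delta_{0}=1$. This buys you two things: the norm estimate does not use compact support at all (only $\phi\in W(L^{1,1})$), and you get a genuine norm inequality rather than mere finiteness. Compact support enters your argument only where it is really needed, namely to upgrade continuity of $\phi$ to uniform continuity so that $\hbox{osc}_{\delta}(\phi)$ is itself continuous (and compactly supported), hence in $W_{0}(L^{1,1})$ rather than just $W(L^{1,1})$ --- a point the paper's proof does not address explicitly, although it is needed for the stated conclusion. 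Two cosmetic remarks: your pointwise bound stacks three redundant enlargements (the factor $2$, the window $[-1,2]^{d+1}$, and the sum over neighbors $(n',l')$), any one of the latter two sufficing; and for $W(L^{1,1})$ the ``triangle inequality in $\ell^{q}$ then $\ell^{p}$'' is simply summation in $\ell^{1}$, so no convexity issue arises. Neither affects correctness.
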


\begin{proof}
Since $\phi$ is continuous and that its support is compact, given $\epsilon_{0}=1$, there exists  $\delta_{0}>0$ such that
\begin{eqnarray*}
|\phi(x_{1}+k_{1}+y_{1},x_{2}+k_{2})|&\leq& |\phi(x_{1}+k_{1}+y_{1},x_{2}+k_{2})-\phi(x_{1}+k_{1},x_{2}+k_{2})|\\
&&\quad+|\phi(x_{1}+k_{1},x_{2}+k_{2})|\\
&<&1+|\phi(x_{1}+k_{1},x_{2}+k_{2})|.
\end{eqnarray*}
for all $ x_{1}\in[0,1],x_{2}\in[0,1]^{d},\,k_{1}\in\bbZ,\,k_{2}\in\bbZ^{d}$, $\delta<\delta_{0}$ and $|y_{1}|<\delta$.
Note that $\phi$ is compactly supported again, one has that there exists $D_{0}>0$ such that
\begin{eqnarray*}
&&\sum_{k_{1}\in \bbZ}\sup_{x_{1}\in [0,1]}\sum_{k_{2}\in \bbZ^{d}}\sup_{x_{2}\in [0,1]^{d}}\sup_{|y_{1}|\leq\delta}|\phi(x_{1}+k_{1}+y_{1},x_{2}+k_{2})|\\
&&\quad <\sum_{k_{1}\in \bbZ}\sup_{x_{1}\in [0,1]}\sum_{k_{2}\in \bbZ^{d}}\sup_{x_{2}\in [0,1]^{d}}(1+|\phi(x_{1}+k_{1},x_{2}+k_{2})|)\\
&&\quad \leq D_{0} +\sum_{k_{1}\in \bbZ}\sup_{x_{1}\in [0,1]}\sum_{k_{2}\in \bbZ^{d}}\sup_{x_{2}\in [0,1]^{d}}|\phi(x_{1}+k_{1},x_{2}+k_{2})|.
\end{eqnarray*}
Therefore, for any $\delta<\delta_{0}$
\begin{eqnarray*}
&&\sum_{k_{1}\in \bbZ}\sup_{x_{1}\in [0,1]}\sum_{k_{2}\in \bbZ^{d}}\sup_{x_{2}\in [0,1]^{d}}|\hbox{osc}_{\delta}(\phi)(x_{1}+k_{1},x_{2}+k_{2})|\\
&&\quad=\sum_{k_{1}\in \bbZ}\sup_{x_{1}\in [0,1]}\sum_{k_{2}\in \bbZ^{d}}\sup_{x_{2}\in [0,1]^{d}}\sup_{|y_{1}|\leq\delta}\sup_{|y_{2}|\leq\delta}\\
&&\quad\quad\quad|\phi(x_{1}+k_{1}+y_{1},x_{2}+k_{2}+y_{2})-\phi(x_{1}+k_{1},x_{2}+k_{2})|\\
&&\quad\leq\sum_{k_{1}\in \bbZ}\sup_{x_{1}\in [0,1]}\sum_{k_{2}\in \bbZ^{d}}\sup_{x_{2}\in [0,1]^{d}}\sup_{|y_{2}|\leq\delta}\sup_{|y_{1}|\leq\delta}\\
&&\quad\quad\quad|\phi(x_{1}+k_{1}+y_{1},x_{2}+k_{2}+y_{2})-\phi(x_{1}+k_{1},x_{2}+k_{2})|\\
&&\quad\leq\sum_{k_{1}\in \bbZ}\sup_{x_{1}\in [0,1]}\sum_{k_{2}\in \bbZ^{d}}\sup_{x_{2}\in [-1,2]^{d}}\sup_{|y_{1}|\leq\delta}\\
&&\quad\quad\quad|\phi(x_{1}+k_{1}+y_{1},x_{2}+k_{2})|+|\phi(x_{1}+k_{1},x_{2}+k_{2})|\\
&&\quad\leq2^{2d}\sum_{k_{1}\in \bbZ}\sup_{x_{1}\in [0,1]}\sum_{k_{2}\in \bbZ^{d}}\sup_{x_{2}\in [0,1]^{d}}\sup_{|y_{1}|\leq\delta}\\
&&\quad\quad\quad|\phi(x_{1}+k_{1}+y_{1},x_{2}+k_{2})|+|\phi(x_{1}+k_{1},x_{2}+k_{2})|\\
&&\quad\leq2^{2d}\sum_{k_{1}\in \bbZ}\sup_{x_{1}\in [0,1]}\sum_{k_{2}\in \bbZ^{d}}\sup_{x_{2}\in [0,1]^{d}}\sup_{|y_{1}|\leq\delta}|\phi(x_{1}+k_{1}+y_{1},x_{2}+k_{2})|\\
&&\quad\quad+2^{2d}\sum_{k_{1}\in \bbZ}\sup_{x_{1}\in [0,1]}\sum_{k_{2}\in \bbZ^{d}}\sup_{x_{2}\in [0,1]^{d}}|\phi(x_{1}+k_{1},x_{2}+k_{2})|\\
&&\quad=2^{2d}(D_{0} +\sum_{k_{1}\in \bbZ}\sup_{x_{1}\in [0,1]}\sum_{k_{2}\in \bbZ^{d}}\sup_{x_{2}\in [0,1]^{d}}|\phi(x_{1}+k_{1},x_{2}+k_{2})|)\\
&&\quad\quad+2^{2d}\|\phi\|_{W(L^{1,1})}\\
&&\quad=2^{2d}(D_{0} +\|\phi\|_{W(L^{1,1})})+2^{2d}\|\phi\|_{W(L^{1,1})}<\infty.
\end{eqnarray*}
Thus, we conclude that there exists $\delta_{0}>0$ such that
 $\hbox{osc}_{\delta}(\phi)\in W_{0}(L^{1,1})$ for any $\delta<\delta_{0}$. 
\end{proof}

\begin{lemma}\label{lem:oscillation}
Let $\phi\in W_{0}(L^{1,1})$ whose support is compact. Then $\lim_{\delta\rightarrow0}\|\hbox{osc}_{\delta}(\phi)\|_{W(L^{1,1})}=0$.
\end{lemma}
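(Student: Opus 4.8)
The plan is to exploit Lemma \ref{pro:in Wiener space}, which already guarantees that $\hbox{osc}_{\delta_0/2}(\phi)\in W_0(L^{1,1})$ (a $W(L^{1,1})$-dominating function), together with the pointwise decay $\hbox{osc}_\delta(\phi)(x)\to 0$ as $\delta\to 0$ for each fixed $x$, which follows from the uniform continuity of $\phi$ (continuous with compact support). The idea is a dominated-convergence argument carried out directly on the $W(L^{1,1})$-norm series. First I would fix $\delta<\delta_0/2$ and observe the monotonicity $\hbox{osc}_\delta(\phi)\leq \hbox{osc}_{\delta_0/2}(\phi)$ pointwise, so that each local sup in the series defining $\|\hbox{osc}_\delta(\phi)\|_{W(L^{1,1})}$ is bounded by the corresponding term for $\hbox{osc}_{\delta_0/2}(\phi)$, whose sum is finite by Lemma \ref{pro:in Wiener space}.

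Next I would use the compact support of $\phi$: there is a finite set $F\subset\bbZ\times\bbZ^{d}$ (depending only on $\mathrm{supp}\,\phi$, enlarged by $1$ in each coordinate to absorb the oscillation shift, valid for all $\delta<\delta_0/2$) such that $\sup_{x_1\in[0,1]}\sup_{x_2\in[0,1]^d}|\hbox{osc}_\delta(\phi)(x_1+k_1,x_2+k_2)|=0$ whenever $(k_1,k_2)\notin F$. Hence the series defining $\|\hbox{osc}_\delta(\phi)\|_{W(L^{1,1})}^{1}$ reduces to a \emph{finite} sum over $F$:
\[
\|\hbox{osc}_\delta(\phi)\|_{W(L^{1,1})}=\sum_{k_1}\sup_{x_1\in[0,1]}\sum_{k_2}\sup_{x_2\in[0,1]^d}|\hbox{osc}_\delta(\phi)(x_1+k_1,x_2+k_2)|=\sum_{(k_1,k_2)\in F}(\cdots).
\]
For a finite sum it suffices to show each term tends to $0$, and for that it suffices to show $\sup_{x_1,x_2}\hbox{osc}_\delta(\phi)(x_1+k_1,x_2+k_2)\to 0$; but this supremum over the compact set $[k_1,k_1+1]\times[k_2,k_2+1]^{d}$ is at most $\sup_{z}\hbox{osc}_\delta(\phi)(z)$, which is precisely the modulus of continuity of $\phi$ on all of $\bbR^{d+1}$ and tends to $0$ as $\delta\to 0$ by uniform continuity of $\phi$.

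Putting these together: given $\epsilon>0$, choose $\delta_1\le\delta_0/2$ so small that the global modulus of continuity of $\phi$ is less than $\epsilon/(|F|\cdot 2^{2d})$ (the $2^{2d}$ accounting for the coordinate enlargement of the boxes as in the proof of Lemma \ref{pro:in Wiener space}); then $\|\hbox{osc}_\delta(\phi)\|_{W(L^{1,1})}<\epsilon$ for all $\delta<\delta_1$. The main obstacle, and the only nonroutine point, is making rigorous the claim that finitely many lattice translates carry the entire $W(L^{1,1})$-sum: one must be careful that the inner $\bbZ^{d}$-sum is genuinely finite for each relevant $k_1$ and that the enlargement of $\mathrm{supp}\,\phi$ by the $\delta$-oscillation is uniform in $\delta<\delta_0/2$ — this is exactly why one first passes to the fixed dominating oscillation $\hbox{osc}_{\delta_0/2}(\phi)$ rather than working with $\hbox{osc}_\delta(\phi)$ directly. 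Once that bookkeeping is in place, the limit is immediate.
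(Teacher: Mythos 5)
Your proof is correct, but it takes a genuinely different route from the paper's. The paper never reduces to a finite sum: it keeps the full lattice series, splits it into the two tails $|k_1|\geq L_0$, $|k_2|\geq L_0$ and the central block $|k_1|,|k_2|<L_0$, bounds each tail by $\epsilon/3$ using the summability of the $W(L^{1,1})$ series for $\hbox{osc}_{\delta}(\phi)$ (Lemma \ref{pro:in Wiener space}), and then shrinks $\delta$ to make the finitely many central terms small via continuity — a standard ``$3\epsilon$'' argument. You instead observe that compact support forces $\hbox{osc}_{\delta}(\phi)$ to vanish outside a fixed compact set uniformly for $\delta<\delta_0/2$, so the $W(L^{1,1})$ norm \emph{is} a finite sum over a $\delta$-independent index set $F$, each of whose terms is dominated by the global modulus of continuity of $\phi$ and hence tends to $0$ by uniform continuity. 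Your version is cleaner and, notably, sidesteps a subtlety that the paper's write-up glosses over: as stated, the paper's $L_0$ in (3.1)--(3.2) is chosen \emph{after} fixing $\delta$, while $\delta_1$ is then chosen depending on $L_0$, which is circular unless one invokes the monotonicity $\hbox{osc}_{\delta}(\phi)\leq\hbox{osc}_{\delta_0}(\phi)$ to make $L_0$ uniform in $\delta$ — exactly the domination step you make explicit. What the paper's tail-splitting buys in exchange is that it does not essentially depend on compact support (only on membership of some $\hbox{osc}_{\delta_0}(\phi)$ in $W(L^{1,1})$ and on a uniform-in-$\delta$ tail bound), so it would survive a relaxation of the compact-support hypothesis, whereas your finite-sum reduction would not. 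Two cosmetic points: the factor $2^{2d}$ in your final estimate is unnecessary once you sum over $F$ directly, and you should state explicitly that $\mathrm{supp}\,\hbox{osc}_{\delta}(\phi)\subseteq\mathrm{supp}\,\phi+\overline{B_{\delta\sqrt{d+1}}}$, which is what makes $F$ independent of $\delta$.
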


\begin{proof}
Due to Lemma \ref{pro:in Wiener space}, one has that there exists $\delta_{0}>0$ such that
 $\hbox{osc}_{\delta}(\phi)\in W_{0}(L^{1,1})$ for any $\delta<\delta_{0}$.
For any $\delta<\delta_{0}$, given $\epsilon>0,$ there exists an integer $L_{0}>0$ such that
\begin{equation}\label{eq:1}
\sum_{k_{1}\in\bbZ}\sup_{x_{1}\in[0,1]}\sum_{|k_{2}|\geq L_{0}}\sup_{x_{2}\in[0,1]^{d}}|\hbox{osc}_{\delta}(\phi)(x_{1}+k_{1},x_{2}+k_{2})|<\frac{\epsilon}{3}
\end{equation}
and 
\begin{equation}\label{eq:2}
\sum_{|k_{1}|\geq L_{0}}\sup_{x_{1}\in[0,1]}\sum_{k_{2}\in\bbZ^{d}}\sup_{x_{2}\in[0,1]^{d}}|\hbox{osc}_{\delta}(\phi)(x_{1}+k_{1},x_{2}+k_{2})|<\frac{\epsilon}{3}.
\end{equation}
Futhermore,  since $\phi$ is continuous, there exists  $\delta_{1}>0$ such that
\[
|\hbox{osc}_{\delta}(\phi)(x_{1}+k_{1},x_{2}+k_{2})|\leq \frac{\epsilon}{2(2L_{0})^{d+1}}
\]
for all $ x_{1}\in[0,1],x_{2}\in[0,1]^{d},\,|k_{1}|<L_{0},\,|k_{2}|<L_{0}$ and $\delta<\delta_{1}$. Thus
\begin{eqnarray}\label{eq:3}
\sum_{|k_{1}|<L_{0}}\sup_{x_{1}\in[0,1]}\sum_{|k_{2}|<L_{0}}\sup_{x_{2}\in[0,1]^{d}}|\hbox{osc}_{\delta}(\phi)(x_{1}+k_{1},x_{2}+k_{2})|&<&2L_{0}(2L_{0})^{d}
\frac{\epsilon}{2(2L_{0})^{d+1}}\nonumber\\
&=&\frac{\epsilon}{3}.
\end{eqnarray}
Combining (\ref{eq:1}), (\ref{eq:2}) and (\ref{eq:3}), for $\delta<\min\{\delta_{0},\,\delta_{1}\}$, one obtains
\begin{eqnarray*}
\|\hbox{osc}_{\delta}(\phi)\|_{W(L^{1,1})}&=&\sum_{k_{1}\in\bbZ}\sup_{x_{1}\in[0,1]}\sum_{k_{2}\in\bbZ^{d}}\sup_{x_{2}\in[0,1]^{d}}|\hbox{osc}_{\delta}(\phi)(x_{1}+k_{1},x_{2}+k_{2})|\\
&=&\sum_{k_{1}\in\bbZ}\sup_{x_{1}\in[0,1]}\left(\sum_{|k_{2}|\geq L_{0}}\sup_{x_{2}\in[0,1]^{d}}|\hbox{osc}_{\delta}(\phi)(x_{1}+k_{1},x_{2}+k_{2})|\right.\\
&&\quad+\left.\sum_{|k_{2}|< L_{0}}\sup_{x_{2}\in[0,1]^{d}}|\hbox{osc}_{\delta}(\phi)(x_{1}+k_{1},x_{2}+k_{2})|\right)\\
&\leq&\sum_{k_{1}\in\bbZ}\left(\sup_{x_{1}\in[0,1]}\sum_{|k_{2}|\geq L_{0}}\sup_{x_{2}\in[0,1]^{d}}|\hbox{osc}_{\delta}(\phi)(x_{1}+k_{1},x_{2}+k_{2})|\right.\\
&&\quad+\left.\sup_{x_{1}\in[0,1]}\sum_{|k_{2}|< L_{0}}\sup_{x_{2}\in[0,1]^{d}}|\hbox{osc}_{\delta}(\phi)(x_{1}+k_{1},x_{2}+k_{2})|\right)\\
&=&\sum_{k_{1}\in\bbZ}\sup_{x_{1}\in[0,1]}\sum_{|k_{2}|\geq L_{0}}\sup_{x_{2}\in[0,1]^{d}}|\hbox{osc}_{\delta}(\phi)(x_{1}+k_{1},x_{2}+k_{2})|\\
&&\quad+\sum_{k_{1}\in\bbZ}\sup_{x_{1}\in[0,1]}\sum_{|k_{2}|< L_{0}}\sup_{x_{2}\in[0,1]^{d}}|\hbox{osc}_{\delta}(\phi)(x_{1}+k_{1},x_{2}+k_{2})|\\
&=&\sum_{k_{1}\in\bbZ}\sup_{x_{1}\in[0,1]}\sum_{|k_{2}|\geq L_{0}}\sup_{x_{2}\in[0,1]^{d}}|\hbox{osc}_{\delta}(\phi)(x_{1}+k_{1},x_{2}+k_{2})|\\
&&\quad+\sum_{|k_{1}|\geq L_{0}}\sup_{x_{1}\in[0,1]}\sum_{|k_{2}|< L_{0}}\sup_{x_{2}\in[0,1]^{d}}|\hbox{osc}_{\delta}(\phi)(x_{1}+k_{1},x_{2}+k_{2})|\\
&&\quad\quad+\sum_{|k_{1}|< L_{0}}\sup_{x_{1}\in[0,1]}\sum_{|k_{2}|< L_{0}}\sup_{x_{2}\in[0,1]^{d}}|\hbox{osc}_{\delta}(\phi)(x_{1}+k_{1},x_{2}+k_{2})|\\
&\leq&\sum_{k_{1}\in\bbZ}\sup_{x_{1}\in[0,1]}\sum_{|k_{2}|\geq L_{0}}\sup_{x_{2}\in[0,1]^{d}}|\hbox{osc}_{\delta}(\phi)(x_{1}+k_{1},x_{2}+k_{2})|\\
&&\quad+\sum_{|k_{1}|\geq L_{0}}\sup_{x_{1}\in[0,1]}\sum_{k_{2}\in\bbZ^{d}}\sup_{x_{2}\in[0,1]^{d}}|\hbox{osc}_{\delta}(\phi)(x_{1}+k_{1},x_{2}+k_{2})|\\
&&\quad\quad+\sum_{|k_{1}|< L_{0}}\sup_{x_{1}\in[0,1]}\sum_{|k_{2}|< L_{0}}\sup_{x_{2}\in[0,1]^{d}}|\hbox{osc}_{\delta}(\phi)(x_{1}+k_{1},x_{2}+k_{2})|\\
&<&\epsilon.
\end{eqnarray*}
Therefore $\lim_{\delta\rightarrow0}\|\hbox{osc}_{\delta}(\phi)\|_{W(L^{1,1})}=0$.
\end{proof}

In order to prove Theorem \ref{th:suanfa}, we also need the following two lemmas.

\begin{lemma}\label{lem:oscillation uniformly zero}
Assume that $\phi\in W_{0}(L^{1,1})$ whose support is compact and $f\in V_{p,q}(\phi)$. Then the oscillation (or modulus of continuity) $\hbox{osc}_{\delta}(f)$ belongs to $L^{p,q}$. Moreover for all $\epsilon>0$, there exists $\delta_{0}>0$ such that
$$\|\hbox{osc}_{\delta}(f)\|_{L^{p,q}}\leq \epsilon \|f\|_{L^{p,q}}$$
uniformly for all $0<\delta<\delta_{0}$ and $f\in V_{p,q}(\phi)$. 
\end{lemma}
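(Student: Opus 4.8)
\,\,
The plan is to reduce everything to a statement about the generator $\phi$ via the series representation $f=\sum_{k_{1}\in\bbZ}\sum_{k_{2}\in\bbZ^{d}}c(k_{1},k_{2})\phi(\cdot-k_{1},\cdot-k_{2})$ with $c\in\ell^{p,q}$ and $\|c\|_{\ell^{p,q}}\approx\|f\|_{L^{p,q}}$ by Proposition \ref{pro:stable}. The first step is the pointwise estimate: since translation commutes with taking oscillations, one checks directly from the definition that
\[
\hbox{osc}_{\delta}(f)(x_{1},x_{2})\leq\sum_{k_{1}\in\bbZ}\sum_{k_{2}\in\bbZ^{d}}|c(k_{1},k_{2})|\,\hbox{osc}_{\delta}(\phi)(x_{1}-k_{1},x_{2}-k_{2})=\bigl(|c|*_{sd}\hbox{osc}_{\delta}(\phi)\bigr)(x_{1},x_{2}).
\]
By Lemma \ref{pro:in Wiener space} there is $\delta_{0}>0$ with $\hbox{osc}_{\delta}(\phi)\in W_{0}(L^{1,1})$ for $\delta<\delta_{0}$, so the right-hand side is of exactly the form covered by Proposition \ref{pro:stableup1} (applied to the generator $\hbox{osc}_{\delta}(\phi)$ and coefficient sequence $|c|$, noting $\||c|\|_{\ell^{p,q}}=\|c\|_{\ell^{p,q}}$). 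This gives $\hbox{osc}_{\delta}(f)\in L^{p,q}$ together with the bound
\[
\|\hbox{osc}_{\delta}(f)\|_{L^{p,q}}\leq\|c\|_{\ell^{p,q}}\,\|\hbox{osc}_{\delta}(\phi)\|_{W(L^{1,1})}.
\]

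The second step is to convert this into the asserted inequality with $\|f\|_{L^{p,q}}$ on the right. Using the lower norm equivalence from Proposition \ref{pro:stable}, there is a constant $c_{1}>0$ (depending only on $\phi,p,q$) with $\|c\|_{\ell^{p,q}}\leq c_{1}^{-1}\|f\|_{L^{p,q}}$, hence
\[
\|\hbox{osc}_{\delta}(f)\|_{L^{p,q}}\leq c_{1}^{-1}\,\|\hbox{osc}_{\delta}(\phi)\|_{W(L^{1,1})}\,\|f\|_{L^{p,q}}.
\]
Now invoke Lemma \ref{lem:oscillation}: $\|\hbox{osc}_{\delta}(\phi)\|_{W(L^{1,1})}\to0$ as $\delta\to0$. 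So given $\epsilon>0$, choose $\delta_{0}'>0$ (shrinking the earlier $\delta_{0}$ if necessary) so that $c_{1}^{-1}\|\hbox{osc}_{\delta}(\phi)\|_{W(L^{1,1})}\leq\epsilon$ for all $0<\delta<\delta_{0}'$; this $\delta_{0}'$ depends only on $\epsilon,\phi,p,q$ and not on $f$, which gives the claimed uniformity over $f\in V_{p,q}(\phi)$.

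The only genuinely delicate point is the pointwise domination $\hbox{osc}_{\delta}(f)\leq|c|*_{sd}\hbox{osc}_{\delta}(\phi)$; it follows by writing $f(x+y)-f(x)=\sum_{k}c(k)\bigl(\phi(x+y-k)-\phi(x-k)\bigr)$, applying the triangle inequality, and then taking the supremum over $|y_{1}|\leq\delta$, $|y_{2}|\leq\delta$ inside each term, where the interchange of $\sum_{k}$ with the supremum is legitimate because the series converges absolutely and uniformly on compacta (by the $W(L^{1,1})$ bound together with $c\in\ell^{\infty}$, exactly as in the computation preceding Proposition \ref{pro:stableup1}). One should also remark that the application of Proposition \ref{pro:stable} requires $\phi$ to satisfy \eqref{eq:dayuling}, which is part of the standing hypothesis that the generator of $V_{p,q}(\phi)$ is admissible; alternatively, if one prefers not to assume the lower bound here, the inequality $\|\hbox{osc}_{\delta}(f)\|_{L^{p,q}}\leq\|\hbox{osc}_{\delta}(\phi)\|_{W(L^{1,1})}\|c\|_{\ell^{p,q}}$ already suffices for the uses made of this lemma in the proof of Theorem \ref{th:suanfa} provided one carries the coefficient norm along. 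Everything else is routine bookkeeping.
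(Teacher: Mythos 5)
Your proposal is correct and follows essentially the same route as the paper's proof: the pointwise domination $\hbox{osc}_{\delta}(f)\leq|c|*_{sd}\hbox{osc}_{\delta}(\phi)$, then Proposition \ref{pro:stableup1} together with the norm equivalence of Proposition \ref{pro:stable} to get $\|\hbox{osc}_{\delta}(f)\|_{L^{p,q}}\leq M\|f\|_{L^{p,q}}\|\hbox{osc}_{\delta}(\phi)\|_{W(L^{1,1})}$, and finally Lemma \ref{lem:oscillation} to make the last factor small uniformly in $f$. Your added remarks (justifying the interchange of supremum and sum, and flagging that Proposition \ref{pro:stable} needs the standing hypothesis \eqref{eq:dayuling}) are points the paper passes over silently, but they do not change the argument.
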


\begin{proof}
Let $f=\sum_{k_{1}\in \bbZ,\,k_{1}\in \bbZ^{d}}c(k_{1},k_{2})\phi(x_{1}-k_{1},x_{2}-k_{2})\in V^{p,q}(\phi)$. Then
\begin{eqnarray*}
\hbox{osc}_{\delta}(f)(x_{1},x_{2})&=&\sup_{|y_{1}|\leq\delta}\sup_{|y_{2}|\leq\delta}|f(x_{1}+y_{1},x_{2}+y_{2})-f(x_{1},x_{2})|\\
&\leq&\sup_{|y_{1}|\leq\delta}\sup_{|y_{2}|\leq\delta}\sum_{k_{1}\in \bbZ,\,k_{1}\in \bbZ^{d}}|c(k_{1},k_{2})|\times\\
&&|\phi(x_{1}+y_{1}-k_{1},x_{2}+y_{2}-k_{2})-\phi(x_{1}-k_{1},x_{2}-k_{2})|\\
&\leq&\sum_{k_{1}\in \bbZ,\,k_{1}\in \bbZ^{d}}|c(k_{1},k_{2})|\times\\
&&\sup_{|y_{1}|\leq\delta}\sup_{|y_{2}|\leq\delta}|\phi(x_{1}+y_{1}-k_{1},x_{2}+y_{2}-k_{2})-\phi(x_{1}-k_{1},x_{2}-k_{2})|\\
&\leq&\sum_{k_{1}\in \bbZ,\,k_{1}\in \bbZ^{d}}|c(k_{1},k_{2})|\hbox{osc}_{\delta}(\phi)(x_{1}-k_{1},x_{2}-k_{2})\\
&=&[|c|*_{sd}\hbox{osc}_{\delta}(\phi)](x_{1},x_{2}),
\end{eqnarray*}
where $|c|=\{|c(k_{1},k_{2})|:\,k_{1}\in\bbZ, \,k_{2}\in\bbZ^{d}\}$.
by Proposition \ref{pro:stableup}, Proposition \ref{pro:stable} and Lemma \ref{pro:in Wiener space}, there exists $M>0$ such that
\begin{eqnarray}\label{eq:4}
\|\hbox{osc}_{\delta}(f)\|_{L^{p,q}}
&\leq&\left\||c|*_{sd}\hbox{osc}_{\delta}(\phi)\right\|_{L^{p,q}}\nonumber\\
&\leq& \||c|\|_{\ell^{p,q}}\|\hbox{osc}_{\delta}(\phi)\|_{W(L^{1,1})}\nonumber\\
&\leq& \|c\|_{\ell^{p,q}}\|\hbox{osc}_{\delta}(\phi)\|_{W(L^{1,1})}\nonumber\\
&\leq& M\|f\|_{L^{p,q}}\|\hbox{osc}_{\delta}(\phi)\|_{W(L^{1,1})}.
\end{eqnarray}
Using Lemma \ref{lem:oscillation}, for any $\epsilon>0$, there exists $\delta_{0}>0$ such that
$$
\|\hbox{osc}_{\delta}(\phi)\|_{W(L^{1,1})}<\frac{\epsilon}{M},\quad~\forall\,\, 0<\delta\leq \delta_{0}.
$$
This combines
(\ref{eq:4}) obtains $\|\hbox{osc}_{\delta}(f)\|_{L^{p,q}}\leq \epsilon \|f\|_{L^{p,q}}$.
\end{proof}

\begin{lemma}\label{lem:oscillation operator}
Let $\phi\in W_{0}(L^{1,1})$ whose support is compact.  Assume that $P$ be any bounded projection from $L^{p,q}$ onto $V_{p,q}(\phi)$.
Then there is  $\gamma_{0}=\gamma_{0}(p,q,P)$ such that the operator $I-PQ_{X}$ is a
contraction on $V_{p,q}(\phi)$ for every strongly-separated $\gamma$-dense set $X$ with $\gamma\leq\gamma_{0}$.
\end{lemma}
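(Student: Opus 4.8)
The plan is to estimate the operator norm of $I-PQ_X$ on $V_{p,q}(\phi)$ by controlling $\|f-Q_Xf\|_{L^{p,q}}$ for $f\in V_{p,q}(\phi)$ and then using boundedness of $P$. Since $P$ is a bounded projection onto $V_{p,q}(\phi)$, for $f\in V_{p,q}(\phi)$ we have $Pf=f$, hence $(I-PQ_X)f=P(f-Q_Xf)$ and therefore $\|(I-PQ_X)f\|_{L^{p,q}}\le\|P\|\,\|f-Q_Xf\|_{L^{p,q}}$. So it suffices to show that $\|f-Q_Xf\|_{L^{p,q}}$ can be made smaller than $\|P\|^{-1}\|f\|_{L^{p,q}}$ once $\gamma$ is small enough, uniformly over $f\in V_{p,q}(\phi)$.

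First I would write out the pointwise bound for $f-Q_Xf$. Using $\sum_{j,k}\beta_{j,k}\equiv1$ and $\mathrm{supp}\,\beta_{j,k}\subset B_\gamma(x_j,y_k)$, for each $(x_1,x_2)$ we get
\[
|f(x_1,x_2)-Q_Xf(x_1,x_2)|=\Bigl|\sum_{j,k}\bigl(f(x_1,x_2)-f(x_j,y_k)\bigr)\beta_{j,k}(x_1,x_2)\Bigr|\le\mathrm{osc}_{\gamma}(f)(x_1,x_2),
\]
because whenever $\beta_{j,k}(x_1,x_2)\ne0$ the point $(x_j,y_k)$ lies within distance $\gamma$ of $(x_1,x_2)$, and the weights sum to one. (One may need $\gamma$ adjusted by the ambient dimension so that the ball of radius $\gamma$ is contained in the cube of side used in the definition of $\mathrm{osc}$; this is a harmless constant.) Consequently $\|f-Q_Xf\|_{L^{p,q}}\le\|\mathrm{osc}_{\gamma}(f)\|_{L^{p,q}}$.

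Next I would invoke Lemma \ref{lem:oscillation uniformly zero}: for every $\epsilon>0$ there is $\delta_0>0$ with $\|\mathrm{osc}_{\delta}(f)\|_{L^{p,q}}\le\epsilon\|f\|_{L^{p,q}}$ for all $0<\delta<\delta_0$ and all $f\in V_{p,q}(\phi)$. Choosing $\epsilon=\tfrac{1}{2\|P\|}$ (or any value $<\|P\|^{-1}$) produces $\delta_0$, and I would set $\gamma_0=\gamma_0(p,q,P)$ to be $\delta_0$ (or $\delta_0$ divided by the dimensional constant from the previous paragraph). Then for any strongly-separated $\gamma$-dense set $X$ with $\gamma\le\gamma_0$ and any $f\in V_{p,q}(\phi)$,
\[
\|(I-PQ_X)f\|_{L^{p,q}}\le\|P\|\,\|\mathrm{osc}_{\gamma}(f)\|_{L^{p,q}}\le\|P\|\cdot\frac{1}{2\|P\|}\|f\|_{L^{p,q}}=\tfrac12\|f\|_{L^{p,q}},
\]
so $I-PQ_X$ is a contraction on $V_{p,q}(\phi)$ with norm at most $\tfrac12<1$.

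The main point requiring care is the pointwise inequality $|f-Q_Xf|\le\mathrm{osc}_{\gamma}(f)$ — specifically making sure the geometry of the balls $B_\gamma$ versus the cubes appearing in the definition of $\mathrm{osc}_\delta$ is handled with the right constant, and that the quasi-interpolant is well-defined pointwise (which follows since $f\in V_{p,q}(\phi)\subset W_0(L^{p,q})$ is continuous and the sum $\sum_{j,k}\beta_{j,k}$ is locally finite because $X$ is strongly separated). Everything else is a direct consequence of Lemma \ref{lem:oscillation uniformly zero} and the boundedness of $P$; no further obstacle is expected.
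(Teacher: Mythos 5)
Your proof is correct and follows essentially the same route as the paper's: write $(I-PQ_X)f=P(f-Q_Xf)$ using $Pf=f$, bound $\|f-Q_Xf\|_{L^{p,q}}$ by $\|\mathrm{osc}_{\gamma}(f)\|_{L^{p,q}}$ via the partition of unity, and apply Lemma \ref{lem:oscillation uniformly zero} with $\epsilon<\|P\|^{-1}$. You in fact supply the pointwise inequality $|f-Q_Xf|\le\mathrm{osc}_{\gamma}(f)$ that the paper leaves implicit, and your worry about a dimensional constant is unnecessary since the Euclidean ball $B_{\gamma}$ is already contained in the coordinate box $\{|y_{1}|\le\gamma,\ |y_{2}|\le\gamma\}$ used in the definition of $\mathrm{osc}_{\gamma}$.
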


\begin{proof}
Let $f\in V_{p,q}(\phi)$, we have
\begin{eqnarray*}
\|f-PQ_{X}f\|_{L^{p,q}}&=&\|P f-PQ_{X}f\|_{L^{p,q}}\\
&\leq&\|P\|_{op}\|f-Q_{X}f\|_{L^{p,q}}\\
&\leq&\|P\|_{op}\|\hbox{osc}_{\gamma}f\|_{L^{p,q}}\\
&\leq&\epsilon\|P\|_{op} \|f\|_{L^{p,q}}\,(\mbox{Lemma \ref{lem:oscillation uniformly zero}}),
\end{eqnarray*}
We can choose a $\gamma_0$ such that for any $\gamma<\gamma_0$,
$\epsilon\|P\|_{op}<1.$ Therefore, we get a contraction operator.
\end{proof}

Now, we are ready to prove the main result in this paper.

\begin{proof}[\textbf{Proof of Theorem \ref{th:suanfa}}]
Let $e_{n}=f-f_{n}$ be the error after $n$ iterations. Using (\ref{eq:iterative algorithm}),
\begin{eqnarray*}
e_{n+1}&=&f-f_{n+1}\\
&=&f-f_{n}-P Q_{X}(f-f_{n})\\
&=&(I-PQ_{X})e_{n}.
\end{eqnarray*}
By Lemma \ref{lem:oscillation operator}, we may choose a $\gamma$ so small that $\|I-PQ_{X}\|_{op}=\alpha<1$. Then one gets
\[
\|e_{n+1}\|_{L^{p,q}}\leq \alpha\|e_{n}\|_{L^{p,q}}\leq \alpha^{n}\|e_{0}\|_{L^{p,q}}.
\]
Thus $\|e_{n}\|_{L^{p,q}}\rightarrow0$, when $n\rightarrow\infty$. This completes the proof.
\end{proof}

\section{Conclusion}
\ \ \ \
In this paper, we discuss reconstruction of functions in principal shift-invariant subspaces of mixed
Lebesgue spaces. We proposed a fast reconstruction algorithm which allows to exactly reconstruct the signals $f$ in the principal shift-invariant subspaces as long as the sampling set $X=\{(x_{j},y_{k}):k,j\in \mathbb{J}\}$ is sufficiently dense. Our results improve the result in \cite{LiLiu}. Studying nonuniform sampling problem in multiply generated vector shift-invariant subspaces of mixed Lebesgue spaces is the goal of future work.
\vspace{10pt}
\\
\textbf{Acknowledgements}

This work was supported partially by the
National Natural Science Foundation of China (11326094 and 11401435).
\\
\textbf{Competing interests}

The author declares that he has no competing interests.
\\
\textbf{Authors' contributions}

Qingyue Zhang provided the questions and gave the proof for the main results. He read and approved the manuscript.

\end{document}